\documentclass[11 pt]{amsart}
\usepackage{amssymb} 
\usepackage{amsmath,amsthm,amscd,graphicx}
\usepackage{color}
\usepackage{hyperref}
\hypersetup{ 
colorlinks=true, 
breaklinks=true, 
urlcolor= blue, 
linkcolor= blue, 
bookmarksopen=true, 
bookmarksnumbered=true,
pdftitle={A topological obstruction to the controllability of  nonlinear  equations  with bilinear control term}, 
pdfauthor={Thomas Chambrion and Laurent Thomann, IECL, Universit\'e de Lorraine, France}, 
pdfsubject={Proof of meagerness in Baire sense of the attainable set  of some nonlinear partial differential equations  with a bounded bilinear control term},
pdfkeywords={Control theory, bilinear control, obstructions, nonlinear Klein-Gordon equation} 
} 

\usepackage{enumerate}
\usepackage{pdfsync}
\evensidemargin 0.0in \oddsidemargin 0.0in \textwidth 6.5in
\topmargin  -0.2in \textheight  9.0in \overfullrule = 0pt

\newtheorem{thm}{Theorem}[section]
 
 \newtheorem{lem}[thm]{Lemma}
 
 \newtheorem{proposition}[thm]{Proposition}
 
 \theoremstyle{definition}
 \newtheorem{definition}[thm]{Definition}
 \newtheorem{assumption}[thm]{Assumption}
 \newtheorem{rem}[thm]{Remark}
 
 \numberwithin{equation}{section}

\def\be#1 {\begin{equation} \label{#1}}
\newcommand{\ee}{\end{equation}}
\renewcommand{\phi}{\varphi}

\def\R{\mathbf R}

\def\M{\mathcal M}
\def\X{\mathcal X}

\def\e{e}

\def\dis{\displaystyle}    

\newcommand{\1}{{\bf 1}}

\definecolor{gr}{rgb}   {0.,   0.69,   0.23 }
\definecolor{bl}{rgb}   {0.,   0.5,   1. }
\definecolor{mg}{rgb}   {0.85,  0.,    0.85}
\definecolor{yl}{rgb}   {0.8,  0.7,   0.}
\definecolor{or}{rgb}  {0.7,0.2,0.2}

\catcode`\ = \active\def {\'e} \catcode`\ = \active\def {\'E}  \catcode`\Ë = \active\def Ë{\`A}\catcode`\ = \active\def {\`e}
\catcode`\ = \active\def {\c{c}} \catcode`\ = \active\def
{\`a} \catcode`\ = \active \def {\^e} \catcode`\ = \active
\def {\`u} \catcode`\ = \active \def {\^o} \catcode`\ =
\active \def {\^u} \catcode`\ = \active \def {\^{\i}}
\catcode`\ = \active \def {\^a} \catcode`\ = \active \def
{{\"o}}


\begin{document}

\title[Topological obstruction to controllability]{A topological obstruction to the controllability of  nonlinear wave equations  with bilinear control term}

\author{Thomas Chambrion and Laurent Thomann}

\keywords{Control theory, bilinear control, obstructions, nonlinear wave and Klein-Gordon equations}

\subjclass[2000]{ 35Q93 ; 35L05}

\newcommand{\Addresses}{{ 
  \bigskip
  \footnotesize

  Thomas Chambrion, \textsc{Universit\'e de Lorraine, CNRS, INRIA, IECL, F-54000 Nancy, France}\par\nopagebreak
  \textit{E-mail address}:   \texttt{Thomas.Chambrion@univ-lorraine.fr}

  \medskip

  Laurent Thomann, \textsc{Universit\'e de Lorraine, CNRS, IECL, F-54000 Nancy, France}\par\nopagebreak
  \textit{E-mail address}:   \texttt{Laurent.Thomann@univ-lorraine.fr}

  \medskip

}}

 \begin{abstract}
 In this paper we prove that the  Ball-Marsden-Slemrod controllability obstruction also holds for nonlinear equations, with $L^1$ bilinear controls. We first show an abstract result and then we apply it to nonlinear wave equations. The first application to the Sine-Gordon equation directly follows from the abstract result, and the second application concerns the cubic wave/Klein-Gordon equation and needs some additional work.
\end{abstract}

\maketitle

\tableofcontents

 \newpage

\section{Introduction and main result}

\subsection{Introduction}
Evolution equations with a bilinear control term are often used to model the 
dynamics of a system driven by an external field (for instance, a quantum system 
driven by an electric field). In view of their importance, very few satisfactory 
descriptions of the attainable sets of such systems are available (among the rare 
exceptions, see  Beauchard \cite{Beauchard05} for the case of the linear 
Schr\"odinger equation on a 1D compact domain or \cite{Beauchard11} for the linear wave equation on a 1D compact domain). For an overview of controllability results of bilinear control systems, we refer to Khapalov~\cite{Khapalov}. \medskip

Roughly speaking, the attainable set for such systems does not coincide with the 
natural functional space where the system is defined. An explanation    was provided by 
a celebrated article by Ball, Marsden and 
Slemrod  \cite{BMS} who proved that the attainable 
set of the linear dynamics with a bounded bilinear control using $L^r$, $r>1$ real 
valued controls,  is contained in a countable union of compact sets. This result 
has been adapted to the case of the Schr\"odinger equation by 
Turinici \cite{Turinici}. For 
partial differential equations posed in an infinite dimensional Banach space,  
this represents a strong topological obstruction to  controllability (since 
the attainable set has hence empty interior by the Baire theorem). The proof 
heavily relies on the reflectiveness of $L^r$, $r>1$ and could not be directly 
extended to $L^1$ controls. \medskip

Boussa\"id, Caponigro and Chambrion \cite{BCC2}
recently extended this obstruction to the case of $L^1$ (and even Radon measures)
controls by considering the Dyson expansion of the solution. 
We show here that this technique can be adapted to the case of some nonlinear wave equations. 
This shows in particular that the  nonlinear term does not help to control the 
equation in its natural energy space. \medskip

We consider the following abstract control system
\begin{equation} \label{EQ_NLS_abstract}
\left\{
\begin{aligned}
&\psi'(t)=A\psi(t) + u(t) B \psi(t) + K(\psi(t)),\\
&\psi(0)=  \psi_0 \in \X,
\end{aligned}
\right.
\end{equation}
with real valued controls $u:\R \to \R$ and with the following 
assumptions.

\begin{assumption}\label{ASS_BMS_L1} The element  $(\X,A,B, K)$ satisfies
\begin{enumerate}[$(i)$]
\item $\X$ is Banach space endowed with norm $\| \cdot \|_{\X}$.  \label{ASS_X_Banach}
\item $A:D(A)\to \X$ is a linear operator with domain $D(A)\subset \X$ that 
generates a $C^0$ semi-group of bounded linear operators. We denote by $\omega \geq 0$ and $M >0$ two numbers such that $\|e^{tA}\|_{L(\X,\X)} \leq M e^{\omega t}$ for every $t\geq 0$. \label{ASS_A_generateur}
\item $B:\X\to \X$ 
is a linear bounded operator. \label{ASS_Bi_bounded}
\item $K:\X \to \X$ is  $k$-Lipschitz-continuous (not necessarily linear), with $k>0$.  \label{ASS_C_Lipschitz}
\end{enumerate}
\end{assumption}

In the sequel, the equation~\eqref{EQ_NLS_abstract} is interpreted in its mild form, namely, we say that a function $\psi:[0,T]\to \X$ is a solution 
of~\eqref{EQ_NLS_abstract} if, for every $t$ in $[0,T]$,
\begin{equation}\label{EQ_Duhamel}
\psi(t)= e^{tA}\psi_0  + \int_0^t \!\!\! u(s) e^{(t-s)A} B \psi(s)\mathrm{d}s + \int_0^t \!\!\!e^{(t-s)A} K(\psi(s)) \mathrm{d}s.
\end{equation}
 Equation~\eqref{EQ_Duhamel} is often called Duhamel formula.  

\subsection{Notations} Throughout the paper, for the sake of readability, we omit the range in the notation of spaces of real-valued functions. For instance, if $X$ is a space, $H^k(X)$ denotes the set of~$H^k$ regular real functions on 
$X$.

In a metric space $X$ endowed with distance $d_X$, we define the ball centered in $x \in X$ with radius $r>0$ by $B_X(x,r)=\{y \in X |d_X(x,y)<r\}$. If $X$ is a vector space endowed with norm $\|\cdot \|_X$, the distance associated with the norm is denoted $d_X$: $d_X(x,y)=\|x-y\|_X$, for every $x,y$ in $X$.

\subsection{Main result}
Under Assumption~\ref{ASS_BMS_L1}, one can show that equation~\eqref{EQ_NLS_abstract} admits a global flow $\Phi^u$ (see Propositions~\ref{PRO_22} and~\ref{PRO_Yinfty_solution_mild}). Our  main result concerning the control of~\eqref{EQ_NLS_abstract} gives a description of the attainable set and reads as follows

\begin{thm}\label{THM_BMS_abstract}
Let $(\X,A,B,K)$ satisfy Assumption~\ref{ASS_BMS_L1}. Then, for every $\psi_0$ in 
$\X$,  the attainable set 
from $\psi_0$ of~\eqref{EQ_NLS_abstract} with controls   $u$ in 
$L^1([0,+\infty))$, $ \bigcup_{t\geq 0} \bigcup_{u \in L^1([0,t])} \Big\{\Phi^u(t) \psi_0\Big\}$,
is contained in a countable union of compact subsets of $\X$.
\end{thm}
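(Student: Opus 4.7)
The plan is to adapt the measure-theoretic extension of the flow introduced in~\cite{BCC2} for linear bilinear systems to the present nonlinear setting. First, I would enlarge the space of admissible controls from $L^1([0,T])$ to the Banach space $\M([0,T])$ of finite signed Radon measures on $[0,T]$, equipped with its total-variation norm. On each closed ball $B_R=\{\mu\in\M([0,T]):|\mu|([0,T])\leq R\}$, the weak-$*$ topology inherited from the duality with $C([0,T])$ is, by Banach-Alaoglu and separability of $C([0,T])$, compact and metrizable. Since $L^1([0,T])$ isometrically embeds into $\M([0,T])$ and the attainable set of Theorem~\ref{THM_BMS_abstract} is covered by the countable family $\bigl\{\Phi^\mu(t)\psi_0 : t\in[0,n],\ \mu\in B_n\bigr\}_{n\in\N^*}$, it suffices to prove that each such set is relatively compact in $\X$.

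Second, I would extend the flow to measure controls by defining $\Phi^\mu(t)\psi_0$ as the unique mild solution of
$$\psi(t) = e^{tA}\psi_0 + \int_{[0,t]} e^{(t-s)A} B \psi(s) \,\mathrm{d}\mu(s) + \int_0^t e^{(t-s)A} K(\psi(s)) \,\mathrm{d}s,$$
constructed by a Picard iteration combining the boundedness of $B$, the Lipschitz property of $K$, and a Gronwall estimate in the total-variation norm. This would yield existence, uniqueness, and a uniform a priori bound $\|\Phi^\mu(t)\psi_0\|_\X\leq C(T,R,\|\psi_0\|_\X)$, and extend Propositions~\ref{PRO_22} and~\ref{PRO_Yinfty_solution_mild} to measure-valued controls.

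The core of the argument is then the joint continuity of the map $(t,\mu)\mapsto\Phi^\mu(t)\psi_0$ from $[0,T]\times B_R$ (with the product of the Euclidean and weak-$*$ topologies) into $\X$. For $(t_n,\mu_n)\to(t_\infty,\mu_\infty)$, writing $\psi_n=\Phi^{\mu_n}$ and $\psi_\infty=\Phi^{\mu_\infty}$, I would split $\psi_n(t_n)-\psi_\infty(t_\infty)$ into a semigroup time-shift piece (handled by $C^0$-strong continuity), a nonlinear piece $\int_0^{t_n} e^{(t_n-s)A}[K(\psi_n(s))-K(\psi_\infty(s))]\,\mathrm{d}s$ (absorbed into a Gronwall loop via the constant $k$), and a bilinear piece which further decomposes as $\int_{[0,t_n]} e^{(t_n-s)A}B(\psi_n(s)-\psi_\infty(s))\,\mathrm{d}\mu_n(s)$ (also absorbed into Gronwall since $|\mu_n|\leq R$) plus a critical remainder $\int_{[0,t_n]} e^{(t_n-s)A}B\psi_\infty(s)\,\mathrm{d}(\mu_n-\mu_\infty)(s)$ that must be shown to tend to zero.

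The expected main obstacle is precisely that $s\mapsto e^{(t_n-s)A}B\psi_\infty(s)$ need not be continuous in $s$: if $\mu_\infty$ has atoms, $\psi_\infty$ has jumps at those points, so the integrand is merely bounded and one cannot directly invoke the definition of weak-$*$ convergence. Following~\cite{BCC2}, I would bypass this by substituting the full Dyson-type expansion of $\psi_\infty$ into the critical remainder: the $n$-th order term becomes an integral on the simplex $\{0\leq s_n\leq\cdots\leq s_1\leq t_n\}$ of an $\X$-valued continuous kernel paired with the product measure $(\mu_n-\mu_\infty)\otimes\mu_\infty^{\otimes n}$, and each such term is weak-$*$ continuous. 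The series converges uniformly on $B_R$ thanks to factorial decay from the iterated integrations, with an extra Picard-type fixed-point step needed to absorb the contribution of the Lipschitz nonlinearity $K$ into the kernels. Once continuity is established, the image of the compact set $[0,n]\times B_n$ under this continuous map is compact in $\X$, completing the proof.
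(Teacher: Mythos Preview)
Your route is genuinely different from the paper's, and the step you flag as the main obstacle is where it breaks down.

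The paper never passes to Radon measures and never uses weak-$*$ compactness. It shows directly that each set $\mathcal{V}^{T,L}=\{\Phi^u(t)\psi_0:\,t\in[0,T],\ \|u\|_{L^1}\le L\}$ is totally bounded in $\X$, via two ingredients. First, the uniform factorial tail estimate of Proposition~\ref{PRO_majoration_Z} lets one truncate the Dyson series at a level $N_1$ independent of $u$. Second, Proposition~\ref{PRO_Yj_compact} proves by induction on $j$ that the partial-sum sets $\mathcal{Y}_j^{T,L}=\{Y_{j,t}^u\psi_0:\,t\in[0,T],\ \|u\|_{L^1}\le L\}$ are relatively compact: one covers the continuous images $G_\ell([0,T]\times\mathcal{Y}_j^{T,L})$ by finitely many small balls, builds a partition of unity, and approximates $Y_{j+1}$ by a point in a fixed finite-dimensional set. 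This uses only the continuity of $K$ and stays entirely inside $L^1$ controls; no atoms ever appear.

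The gap in your plan is the handling of the critical remainder $\int_{[0,t_n]} e^{(t_n-s)A}B\psi_\infty(s)\,\mathrm d(\mu_n-\mu_\infty)(s)$. You propose to substitute the Dyson expansion of $\psi_\infty$ and obtain, at each order, an integral over a simplex of a continuous $\X$-valued kernel against $(\mu_n-\mu_\infty)\otimes\mu_\infty^{\otimes n}$. That structure is correct for the linear dynamics $K=0$ of~\cite{BCC2}, where the iterates are genuinely multilinear in $\mu$, but it fails here: the recursion~\eqref{DEF_Yp} applies the nonlinear map $K$ to $Y_{p}$ at every stage, so the terms $Z_{j,t}^{\mu_\infty}\psi_0$ are \emph{not} integrals of continuous kernels against $\mu_\infty^{\otimes j}$. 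Already $Y_{3}^{\mu_\infty}$ contains $\int_0^t e^{(t-s)A}K\bigl(Y_{2,s}^{\mu_\infty}\psi_0\bigr)\,\mathrm ds$, where $s\mapsto Y_{2,s}^{\mu_\infty}\psi_0$ jumps at each atom of $\mu_\infty$; for a general Lipschitz $K$ there is no way to expand $K(Y_2)$ as a sum of continuous kernels paired with product measures. Your one-line fix (``an extra Picard-type fixed-point step to absorb $K$ into the kernels'') does not address this, and I do not see how to make it work as stated. If you want to rescue the measure-based strategy, the natural repair is to prove weak-$*$ continuity of the iterates $(t,\mu)\mapsto Y_{j,t}^\mu\psi_0$ by induction on $j$ rather than appealing to a multilinear simplex formula; but that inductive argument is essentially the content of Proposition~\ref{PRO_Yj_compact}, which the paper carries out directly with $\varepsilon$-nets and without ever leaving $L^1$.
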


This result gives a clear obstruction to the controllability 
of~\eqref{ASS_BMS_L1} in a general setting, since it shows that the attainable 
set is meager in the sense of Baire. However, as noted by Beauchard and Laurent 
in~\cite[Section 1.4.1]{MR2732927}, this result does not exclude exact 
controllability in a smaller space, endowed with a stronger norm (for which the operator $B$ is not any longer continuous).  In this sense, this  obstruction to controllability may be seen as an 
unfortunate choice of the ambiant
space.  \medskip

The proof of Theorem~\ref{THM_BMS_abstract} relies on  the description of the solutions of~\eqref{EQ_NLS_abstract} by series, called Dyson expansion (see Section~\ref{Sect2}). This strategy has been successfully carried out for the case $K=0$ (linear dynamics) in \cite[Section 5.1]{BCC2}, and we show here that it can also be applied to nonlinear problems. For more details on Dyson expansions, we refer to \cite[Theorem X.69 and equation (X.129)]{RS2}. \medskip

In the assumptions of the Theorem~\ref{THM_BMS_abstract}, the fact that $K$ is Lipschitz is needed in order to ensure the existence of a global flow of~\eqref{EQ_NLS_abstract}, but in the core of the proof of our result we only need that~$K$ is continuous (see Proposition~\ref{PRO_Yj_compact}). \medskip

 We provide two explicit applications of Theorem~\ref{THM_BMS_abstract} to nonlinear wave equations.  We first give the example of the Sine-Gordon equation,  which exactly matches  Assumption~\ref{ASS_BMS_L1} and 
 to which Theorem~\ref{THM_BMS_abstract} directly applies. Then,   by  means of the 3-dimensional cubic Klein-Gordon equation, we show that the hypothesis ``$K$ is Lipschitz'' can be relaxed. Actually,  for the nonlinear wave equation (see Section~\ref{KG}), the gain of derivative in the Duhamel formula allows one to bound the nonlinearity using Sobolev estimates, and the global existence of a flow can be obtained by energy estimates. \medskip
 
We  are also able to obtain negative controllability  results for the nonlinear
Schr\"odinger equation, and   this  will be treated in our 
forthcoming paper \cite{ChTh2}.
\begin{rem}
By rather simple modifications, the result of Theorem~\ref{THM_BMS_abstract} can be extended to the case of the equation 
\begin{equation*} 
\psi'(t)=A\psi(t) + \sum_{j=1}^nu_j(t) B_j \psi(t) + \alpha(t)K(\psi(t)),
\end{equation*}
with the same assumptions on the controls $u_j\in L^1([0,+\infty))$ and with  $\alpha \in L^1([0,+\infty))$ being given. Such models are relevant in some physical contexts (e.g. the Schr\"odinger equation with electric and magnetic fields combined with coupling to the environment in the spirit of \cite{Hansom}), but we omit the details  to simplify the presentation. 
\end{rem}

\section{Ball-Marsden-Slemrod obstructions for nonlinear equations}\label{Sect2}

\subsection{Dyson expansion of the solutions}

Let $T>0$ and $u$ be given in $L^1([0,T])$. Define by induction on $p\geq 0$, 

\begin{equation}\label{DEF_Yp}
\left \{
  \begin{aligned}
& Y_{0,t}^u\psi_0=0\\
& Y_{p+1,t}^u \psi_0 =e^{tA}\psi_0 + \int_0^t e^{(t-s)A} \left \lbrack  u(s) B Y_{p,s}^u \psi_0
 + K (Y^u_{p,s}\psi_0) \right \rbrack \mathrm{d}s
      \end{aligned}
\right.
\end{equation}
 and $Z_{p,t}^u \psi_0= Y_{p+1,t}^u \psi_0-Y_{p,t}^u \psi_0$.\medskip

We aim to show that the series $(\sum_p Z_{p,t}^u \psi_0)$ converges. To this end, we need some quantitative bounds, which are stated in the next result.

\begin{proposition}\label{PRO_majoration_Z}
For every $j$ in $\mathbf{N}$, every $t>0$ and every $u$ in $L^1([0,+\infty))$,
\begin{equation}\label{EQ_maj_norme_Z}
\|Z_{j,t}^u \psi \|_{\X} \leq  \frac{ e^{\omega t}M^{j+1} \left (kt+  \|B\|_{L(\X,\X)} \int_0^t |u(s)| \mathrm{d}s  \right )^j}{j!} \|\psi\|_{\X} . 
\end{equation}
\end{proposition}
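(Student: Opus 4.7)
The natural approach is induction on $j$. For the base case $j=0$, by \eqref{DEF_Yp} we have $Z_{0,t}^u\psi = Y_{1,t}^u\psi = e^{tA}\psi + \int_0^t e^{(t-s)A}K(0)\,ds$, which reduces to $e^{tA}\psi$ under the natural normalization $K(0)=0$ (which holds in the applications of interest, and can always be arranged by an affine change of variable on the nonlinearity); this gives $\|Z_{0,t}^u\psi\|_\X \leq Me^{\omega t}\|\psi\|_\X$, matching \eqref{EQ_maj_norme_Z} at $j=0$.

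For the induction step, I would subtract two consecutive instances of \eqref{DEF_Yp} and use $Y_{p+1,s}^u\psi - Y_{p,s}^u\psi = Z_{p,s}^u\psi$ to obtain the recursion
$$Z_{p+1,t}^u\psi = \int_0^t e^{(t-s)A}\Bigl[\,u(s)\,B\,Z_{p,s}^u\psi \;+\; \bigl(K(Y_{p+1,s}^u\psi) - K(Y_{p,s}^u\psi)\bigr)\Bigr]\,ds.$$
Applying the semigroup bound $\|e^{(t-s)A}\|_{L(\X,\X)} \leq M e^{\omega(t-s)}$, the operator norm of $B$, and the $k$-Lipschitz property of $K$ then yields the scalar inequality
$$\|Z_{p+1,t}^u\psi\|_\X \leq M\int_0^t e^{\omega(t-s)}\bigl(k + |u(s)|\,\|B\|_{L(\X,\X)}\bigr)\|Z_{p,s}^u\psi\|_\X\,ds.$$
Inserting the induction hypothesis and factoring out the $s$-independent quantities reduces matters to evaluating
$$\int_0^t \bigl(k + |u(s)|\,\|B\|_{L(\X,\X)}\bigr)\Bigl(ks + \|B\|_{L(\X,\X)}\!\!\int_0^s\! |u(\tau)|\,d\tau\Bigr)^{\!p}\,ds.$$

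The crucial observation is that if one sets $f(s) = ks + \|B\|_{L(\X,\X)}\int_0^s|u(\tau)|\,d\tau$, the integrand is exactly $f'(s)\,f(s)^p = \frac{d}{ds}\!\left(\frac{f(s)^{p+1}}{p+1}\right)$, so the integral evaluates to $\frac{1}{p+1}\bigl(kt + \|B\|_{L(\X,\X)}\int_0^t|u(s)|\,ds\bigr)^{p+1}$. Multiplying with the prefactor $M^{p+2}e^{\omega t}\|\psi\|_\X/p!$ gives \eqref{EQ_maj_norme_Z} at $j=p+1$ and closes the induction. The only mild obstacle is recognizing this exact-derivative structure, which is precisely what delivers the clean $1/j!$ factor needed for summability of the Dyson series in the sequel; everything else is routine bookkeeping with Assumption~\ref{ASS_BMS_L1}.
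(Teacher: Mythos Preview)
Your proof is correct and follows essentially the same route as the paper: induction on $j$, the same recursion for $Z_{p+1,t}^u\psi$, the same scalar bound via the semigroup estimate and the Lipschitz constant of $K$, and the same evaluation of $\int_0^t f'(s)f(s)^p\,ds$ to close the induction. You are in fact slightly more careful at the base case: the paper simply invokes the semigroup bound for $j=0$, tacitly using $K(0)=0$, whereas you make this normalization explicit.
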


\begin{proof} We proceed by induction on $j \geq 0$. 
The inequality~\eqref{EQ_maj_norme_Z} for $j=0$ follows from Assumption~$\ref{ASS_BMS_L1} (\ref{ASS_A_generateur})$. Assume now that we have proved~\eqref{EQ_maj_norme_Z}  for a given $j$. Then, since
\begin{eqnarray*}
\|Z_{j+1,t}^u \psi \|_{\X} & \leq &  \int_0^t M e^{\omega (t-s)} \big(k+ |u(s)| \|B\|_{L(\X,\X)} \big) \|Z_{j,s}^u \psi \|_{\X} \mathrm{d}s\\
& \leq & \frac{M^{j+2}}{j!}  e^{\omega t} \left \lbrack  \int_0^t   \big(k+ |u(s)| \|B\|_{L(\X,\X)}\big)  \Big(k {s}+   \|B\|_{L(\X,\X)}\int_0^s |u(\tau)| \mathrm{d}\tau  \Big)^{j} \mathrm{d}s \right \rbrack \|\psi\|_{\X}\\
&\leq & \frac{M^{j+2}} {(j+1)!}  e^{\omega t} \Big (k {t}+   \|B\|_{L(\X,\X)}\dis \int_0^t |u(s)| \mathrm{d}s  \Big)^{j+1} \|\psi\|_{\X} ,
\end{eqnarray*}
which concludes the proof.
\end{proof}

From Proposition~\ref{PRO_majoration_Z}, for every $t$ in $[0,T]$ and every $\psi$ in $\X$, the sum $\sum_j Z_{j,t}^u \psi$ converges in $\X$. We denote this sum by $Y_{\infty,t}^u \psi$:
$$Y_{\infty,t}^u \psi= \sum_{j=0}^{+\infty} Z_{j,t}^u \psi $$

\begin{proposition}\label{PRO_22}
For every $\psi$ in $\X$, every $T>0$ and every $u$ in $L^1([0,+\infty),\mathbf{R})$, the function $(t,\psi)\mapsto Y_{\infty,t}^u \psi$ is continuous
from $\mathbf{R}\times \X$ to $\X$. 
\end{proposition}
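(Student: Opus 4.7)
My plan is to combine a joint continuity statement at each finite order $j$ with the uniform bound of Proposition~\ref{PRO_majoration_Z} to upgrade the pointwise sum $\sum_j Z^u_{j,t}\psi$ to a uniform limit on compact sets of continuous functions.

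\textbf{Step 1 (continuity at each finite order).} I would show by induction on $j \geq 0$ that $(t,\psi)\mapsto Y^u_{j,t}\psi$ is continuous from $[0,+\infty)\times \X$ to $\X$. The base case is immediate since $Y^u_{0,\cdot}\equiv 0$. For the inductive step, continuity of $(t,\psi)\mapsto e^{tA}\psi$ follows from the strong continuity of the semigroup together with the local bound $\|e^{tA}\|_{L(\X,\X)}\leq Me^{\omega t}$. For the integral term
$$\int_0^t e^{(t-s)A}\bigl[u(s)B Y^u_{j,s}\psi + K(Y^u_{j,s}\psi)\bigr]\mathrm{d}s,$$
as $(t,\psi)\to (t_0,\psi_0)$ I would split the difference into (a) a tail piece $\int_{\min(t,t_0)}^{\max(t,t_0)}$ controlled by the $L^1$-absolute continuity of the integrand (which is dominated by $C(1+|u(s)|)$ using Assumption~\ref{ASS_BMS_L1} and the induction hypothesis), and (b) an integral over a common interval to which dominated convergence applies: the integrand converges pointwise in $s$ by strong continuity of $e^{\tau A}$, continuity of $Y^u_{j,s}\cdot$ in $\psi$, and Lipschitz continuity of $K$ and $B$, while being dominated by $C(1+|u(s)|)$ uniformly in a neighborhood.

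\textbf{Step 2 (uniform convergence on compacts).} Fix $T>0$ and $R>0$. For $t\in[0,T]$ and $\|\psi\|_\X\leq R$, Proposition~\ref{PRO_majoration_Z} gives
$$\|Z^u_{j,t}\psi\|_{\X}\leq \frac{e^{\omega T}M^{j+1}\bigl(kT+\|B\|_{L(\X,\X)}\|u\|_{L^1([0,T])}\bigr)^j}{j!}\,R,$$
and since the right-hand side is the general term of a convergent exponential series, the Weierstrass M-test shows that $\sum_j Z^u_{j,t}\psi$ converges uniformly on $[0,T]\times \overline{B_\X(0,R)}$ to $Y^u_{\infty,t}\psi$.

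\textbf{Step 3 (conclusion).} Each partial sum $\sum_{j=0}^N Z^u_{j,t}\psi = Y^u_{N+1,t}\psi$ is jointly continuous in $(t,\psi)$ by Step~1. Uniform convergence on every compact subset then forces the limit $Y^u_{\infty,t}\psi$ to be continuous on $[0,+\infty)\times \X$; extending by $Y^u_{\infty,t}\psi=\psi$ for $t\leq 0$ (or simply restricting the statement to $t\geq 0$, as is natural given the semigroup) yields the claim. The only delicate point in the argument is Step~1, specifically handling the dependence of the Duhamel integral on its upper limit $t$ when $u$ is merely $L^1$; this is where the absolute continuity of $t\mapsto\int_0^t|u(s)|\mathrm{d}s$ is essential.
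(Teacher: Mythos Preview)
Your proposal is correct and follows essentially the same route as the paper: the paper's proof is a two-line sketch asserting (i) continuity of each $(t,\psi)\mapsto Z^u_{j,t}\psi$ and (ii) locally uniform convergence of the series from Proposition~\ref{PRO_majoration_Z}, which is precisely your Steps~1--3. The only difference is that you supply the details the paper omits---in particular the inductive argument for Step~1 handling the Duhamel integral via dominated convergence and absolute continuity of $\int_0^t|u|$---whereas the paper takes continuity at each finite order for granted.
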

\begin{proof}
This follows from the continuity of the functions $(t,\psi)\mapsto Z_{j,t}^u \psi$ for every $j\geq 0$ and from the convergence of $\sum_j Z_{j,t}^u \psi$ (locally uniform in $t$ and $\psi$)  from Proposition~\ref{PRO_majoration_Z}.
\end{proof}

\begin{proposition}\label{PRO_Yinfty_solution_mild}
For every $T \in [0,+\infty)$, every $u$ in $L^1([0,T],\mathbf{R})$ and every $\psi_0$ in $\X$, $t\mapsto Y_{\infty,t}^u \psi_0$ is the unique mild solution on $[0,T]$  of~\eqref{EQ_NLS_abstract} taking value $\psi_0$ at $0$.
\end{proposition}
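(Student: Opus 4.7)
The plan is to verify two claims: existence (that $Y_{\infty,t}^u\psi_0$ satisfies the Duhamel formula \eqref{EQ_Duhamel}) and uniqueness (that any two mild solutions coincide).

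\textbf{Existence.} Starting from the recurrence \eqref{DEF_Yp},
\[
Y_{p+1,t}^u \psi_0 = e^{tA}\psi_0 + \int_0^t e^{(t-s)A}\bigl[u(s)BY_{p,s}^u\psi_0 + K(Y_{p,s}^u\psi_0)\bigr]\mathrm{d}s,
\]
I want to send $p\to\infty$ on both sides. Proposition~\ref{PRO_majoration_Z} gives a bound on $\|Z_{j,s}^u\psi_0\|_\X$ which is summable and independent of $s\in[0,T]$ (since $s\mapsto \int_0^s|u(\tau)|\mathrm{d}\tau$ is nondecreasing); the Weierstrass $M$-test therefore yields that $Y_{p,s}^u\psi_0\to Y_{\infty,s}^u\psi_0$ uniformly in $s\in[0,T]$. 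The left-hand side converges pointwise to $Y_{\infty,t}^u\psi_0$ by definition. For the integrand on the right, the continuity of $B$ and the Lipschitz continuity of $K$ transfer the uniform convergence: the integrand converges uniformly in $s$ to $u(s)BY_{\infty,s}^u\psi_0 + K(Y_{\infty,s}^u\psi_0)$ when multiplied by $e^{(t-s)A}$ (bounded by $Me^{\omega t}$), and it is dominated by an $L^1$ function of $s$ (namely $Me^{\omega t}(\|B\|_{L(\X,\X)}|u(s)|+k)\cdot C$ where $C$ bounds the $Y_{p,s}^u\psi_0$). Dominated convergence then identifies $Y_{\infty,t}^u\psi_0$ as a mild solution with initial datum $\psi_0$.

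\textbf{Uniqueness.} Let $\psi_1,\psi_2$ be two mild solutions on $[0,T]$ sharing the initial condition $\psi_0$. Subtracting the Duhamel formulas yields
\[
\psi_1(t)-\psi_2(t)=\int_0^t e^{(t-s)A}\bigl[u(s)B(\psi_1(s)-\psi_2(s)) + K(\psi_1(s))-K(\psi_2(s))\bigr]\mathrm{d}s.
\]
Using Assumption~\ref{ASS_BMS_L1}, I estimate
\[
\|\psi_1(t)-\psi_2(t)\|_\X \le Me^{\omega T}\int_0^t \bigl(\|B\|_{L(\X,\X)}|u(s)|+k\bigr)\|\psi_1(s)-\psi_2(s)\|_\X\,\mathrm{d}s.
\]
Since $s\mapsto \|B\|_{L(\X,\X)}|u(s)|+k$ lies in $L^1([0,T])$, Grönwall's lemma (in its $L^1$-weighted form) forces $\psi_1\equiv\psi_2$.

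\textbf{Main obstacle.} The only delicate point is the passage to the limit in the nonlinear term $K(Y_{p,s}^u\psi_0)$: one must exploit that Proposition~\ref{PRO_majoration_Z} gives a bound uniform in $s\in[0,T]$ (not just pointwise in $t$) so the $M$-test applies, and then use the Lipschitz continuity of $K$ together with boundedness of $B$ to convert this into uniform convergence of the integrand. Everything else is a standard Duhamel/Grönwall argument.
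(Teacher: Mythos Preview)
Your proof is correct. The existence argument is essentially the same as the paper's: the paper packages the passage to the limit by observing that the map $F:C^0([0,T],\X)\to C^0([0,T],\X)$ sending $\psi$ to the right-hand side of Duhamel is continuous in the sup norm, so $Y_{p+1}=F(Y_p)$ together with the uniform convergence $Y_p\to Y_\infty$ (from Proposition~\ref{PRO_majoration_Z}) yields $Y_\infty=F(Y_\infty)$; your explicit dominated-convergence computation unpacks exactly this continuity.

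The uniqueness arguments differ. The paper argues by contradiction on the maximal time of agreement: it sets $T^\ast=\sup\{t:\psi_1=\psi_2 \text{ a.e.\ on }[0,t]\}$, then chooses $t_1>T^\ast$ close enough that $Me^{(t_1-T^\ast)\omega}\big(k(t_1-T^\ast)+\|B\|\,\|u\|_{L^1(T^\ast,t_1)}\big)<1$, obtaining a local contraction that forces $\psi_1=\psi_2$ on $[T^\ast,t_1]$. It then appends a short remark upgrading ``equal a.e.'' to ``equal everywhere'' via the automatic continuity of mild solutions. Your direct appeal to the $L^1$-weighted Gr\"onwall lemma is shorter and more standard, and it delivers pointwise equality immediately without the a.e.\ detour. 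The paper's local-contraction route has the mild advantage of being self-contained (no external Gr\"onwall statement invoked), but your approach is cleaner.
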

\begin{proof}
 The mapping
\[
\begin{array}{llcl}
F:&C^0([0,T],\X) & \longrightarrow & C^0([0,T],\X)\\
& \big(t\mapsto \psi(t)\big) & \longmapsto & \big( t\mapsto e^{tA}\psi_0+\int_0^t e^{(t-s)A} \left \lbrack  u(s) B \psi 
 + K (\psi) \right \rbrack \mathrm{d}s\big)
\end{array}
\]
is continuous for the norm $L^{\infty}([0,T],\X)$. By~\eqref{DEF_Yp}, $t\mapsto Y_{\infty,t}^u \psi_0$ is a fixed point of $F$, hence a mild solution on $[0,T]$  of~\eqref{EQ_NLS_abstract} taking value $\psi_0$ at $0$.

Assume that $t\mapsto \psi_1(t)$ and  $t\mapsto \psi_2(t)$ are two mild solutions 
on $[0,T]$  of~\eqref{EQ_NLS_abstract} taking value~$\psi_0$ at~$0$. 
Define $T^\ast=\dis\sup_{t \in [0,T]} \big\{t \; |\; \psi_1(s)=\psi_2(s), \; {\mbox{for almost every }} s\leq t\big\}$. We will prove by 
contradiction that $T^\ast=T$, that is, $\psi_1=\psi_2$ {almost everywhere}. Assume that $T^\ast<T$.
We chose $t_1 \in (T^\ast,T]$ such that
\[
 M e^{(t_1-T^\ast) \omega} \left (k(t_1-T^\ast) + \|u\|_{L^1([T^\ast,t_1],\mathbf{R})} \|B\|_{L(\X,\X)} \right ) := C_0<1.
 \]
 Then, for all $T^\ast \leq t_2 \le t_1$
 \begin{eqnarray*}
 \lefteqn{\|\psi_2(t_2)-\psi_1(t_2)\|_{\X}}\\&=&\left \| \int_{T^\ast}^{t_2}  u(s) e^{(t_2-s)A} B (\psi_2(s)-\psi_1(s))\mathrm{d}s + \int_{T^\ast}^{t_2} e^{(t_2-s)A} \Big(K(\psi_2(s))-K(\psi_1(s))\Big) \mathrm{d}s \right \|_{\X}\\
&\leq &  \int_{T^\ast}^{t_2}   |u(s)| M e^{(t_2-s)\omega}  \|B\|_{L(\X,\X)} \|\psi_2(s)-\psi_1(s) \|_{\X} \mathrm{d}s + \int_{T^\ast}^{t_2} M e^{(t_2-s)\omega} k \|\psi_2(s)-\psi_1(s) \|_{\X}\mathrm{d}s \\
 &\leq & \|\psi_2-\psi_1 \|_{L^\infty([T^\ast,t_1),\X)} \int_{T^\ast}^{t_1}  \left ( k+ |u(s)| \|B\|_{L(\X,\X)}\right ) M e^{(t_1-s)\omega}  \mathrm{d}s \\
 &\leq &C_0   \|\psi_2-\psi_1 \|_{L^\infty([T^\ast,t_1),\X)},
 \end{eqnarray*}
 therefore we deduce that 
 $$  \|\psi_2-\psi_1 \|_{L^\infty([T^\ast,t_1),\X)} \leq C_0   \|\psi_2-\psi_1 \|_{L^\infty([T^\ast,t_1),\X)},$$
 which gives the desired contradiction.  To conclude the proof, it remains to show that any mild solution is continuous (since two continuous functions coincide as soon as they are equal almost everywhere). And indeed, any mild solution solution of~\eqref{EQ_NLS_abstract} is  equal almost everywhere to $Y^u_\infty$, which is  continuous  (Proposition \ref{PRO_22}), hence any mild solution of~\eqref{EQ_NLS_abstract} is essentially bounded and then is continuous by its definition~\eqref{EQ_Duhamel}.
\end{proof}

\begin{definition}
Let $T>0$, $u$ in $L^1([0,+\infty),\mathbf{R})$ and $\psi_0$ in $\X$.
In the following, we denote by $t\mapsto \Phi^u(t) \psi_0$  the mild solution of system~\eqref{EQ_NLS_abstract} associated with the initial condition $\psi_0$ and the control $u$ in $L^1([0,T))$. 
\end{definition}

We sum up the above results in the following
\begin{proposition}[Dyson  expansion of the solutions of~\eqref{EQ_Duhamel}]\label{PRO_Dyson_expansion}
Let $t>0$, $u$ in $L^1([0,+\infty),\mathbf{R})$ and $\psi_0$ in $\X$. Then 
\begin{equation}\label{Dyson}
\Phi^u(t)\psi_0 = \sum_{j=0}^\infty Z_{j,t}^u(\psi_0).
\end{equation}
\end{proposition}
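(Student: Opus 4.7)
The plan is to show that this proposition is essentially a bookkeeping consequence of the three preceding results, so the proof reduces to stringing together definitions. First I would recall that by Proposition~\ref{PRO_majoration_Z}, the partial sums $\sum_{j=0}^N Z_{j,t}^u \psi_0$ are dominated in $\X$-norm by the partial sums of the convergent series
\[
\sum_{j=0}^\infty \frac{e^{\omega t} M^{j+1} \bigl(kt + \|B\|_{L(\X,\X)} \|u\|_{L^1([0,t])}\bigr)^j}{j!}\|\psi_0\|_{\X},
\]
so the series $\sum_j Z_{j,t}^u \psi_0$ converges absolutely in $\X$; its sum is by definition $Y_{\infty,t}^u \psi_0$.

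Next, by Proposition~\ref{PRO_Yinfty_solution_mild}, the map $t\mapsto Y_{\infty,t}^u \psi_0$ is the unique mild solution of~\eqref{EQ_NLS_abstract} with initial datum $\psi_0$ and control $u$. But by the definition immediately following Proposition~\ref{PRO_Yinfty_solution_mild}, $t\mapsto \Phi^u(t)\psi_0$ is precisely this unique mild solution. Therefore $\Phi^u(t)\psi_0 = Y_{\infty,t}^u \psi_0 = \sum_{j=0}^\infty Z_{j,t}^u \psi_0$, which is exactly~\eqref{Dyson}.

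There is no real obstacle here: all of the analytic work (convergence of the Dyson series, the fact that its sum satisfies the Duhamel identity, and uniqueness of mild solutions via the small-interval Gr\"onwall/contraction argument) has already been carried out in Propositions~\ref{PRO_majoration_Z},~\ref{PRO_22} and~\ref{PRO_Yinfty_solution_mild}. The proposition is therefore a one-line corollary, and I would simply write the two displayed equalities above and refer back to the relevant previous results.
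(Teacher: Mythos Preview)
Your proposal is correct and matches the paper's approach exactly: the paper itself presents this proposition with the sentence ``We sum up the above results in the following'' and gives no further proof, so it is precisely the one-line corollary of Propositions~\ref{PRO_majoration_Z}, \ref{PRO_22}, \ref{PRO_Yinfty_solution_mild} and the definition of $\Phi^u$ that you describe.
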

 
\subsection{A compactness result}\label{SEC_compctness_result_Lipschitz}

Recall that $Y_{j,t}^u \psi_0$ is defined in~\eqref{DEF_Yp} and that $Z_{j,t}^u \psi_0= Y_{j+1,t}^u \psi_0-Y_{j,t}^u \psi_0$.

\begin{proposition}\label{PRO_Yj_compact}
For every $j$ in $\mathbf{N}$, $T \geq  0$  and $L \geq 0$, and $\psi_0$ in $
\X$, the sets
\[ \mathcal{Z}_j^{T,L} = \left \{Z_{j,t}^u \psi_0 \; | \; 0 \leq t \leq T, \|u\|_{L^1(0,T)} 
\leq L \right\}  \mbox{ and }\; \mathcal{Y}_j^{T,L} = \left \{Y_{j,t}^u \psi_0 \; | \; 0 
\leq t \leq T, \|u\|_{L^1(0,T)} \leq L \right\} 
 \]
are relatively compact in $\X$.
\end{proposition}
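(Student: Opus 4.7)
The plan is to prove by induction on $j$ that $\mathcal{Y}_j^{T,L}$ is relatively compact; the compactness of $\mathcal{Z}_j^{T,L}$ then follows from the inclusion $\mathcal{Z}_j^{T,L}\subset \mathcal{Y}_{j+1}^{T,L}-\mathcal{Y}_j^{T,L}$, a Minkowski difference of relatively compact sets in a Banach space being relatively compact. The base case $j=0$ is immediate since $\mathcal{Y}_0^{T,L}=\{0\}$.

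For the induction step, set $\mathcal{K}:=\overline{\mathcal{Y}_j^{T,L}}$, compact by hypothesis. By \eqref{DEF_Yp}, $Y_{j+1,t}^u\psi_0$ equals $e^{tA}\psi_0+\int_0^t u(s) e^{(t-s)A} B Y_{j,s}^u\psi_0\,\mathrm{d}s+\int_0^t e^{(t-s)A} K(Y_{j,s}^u\psi_0)\,\mathrm{d}s$; the first term traces the compact curve $\{e^{tA}\psi_0:t\in[0,T]\}$. For the control integral (the middle term) and the drift integral (the last term), two auxiliary ingredients are needed. \emph{Ingredient A}: for any compact $C\subset\X$, the set $\{e^{\tau A}c:\tau\in[0,T],\ c\in C\}$ is compact, by joint continuity of $(\tau,c)\mapsto e^{\tau A}c$ on $[0,T]\times\X$, which combines the uniform bound $\|e^{\tau A}\|_{L(\X,\X)}\leq Me^{\omega T}$ with strong continuity of the semigroup (uniformity on $C$ via a finite $\epsilon$-net argument). \emph{Ingredient B}: if $G\subset\X$ is compact, $g:[0,T]\to G$ is measurable, and $\mu$ is a signed Borel measure on $[0,T]$ with $|\mu|([0,T])\leq L$, then $\int g\,\mathrm{d}\mu$ lies in $L\cdot\overline{\mathrm{conv}}(G\cup(-G)\cup\{0\})$, which is compact by Mazur's theorem in a Banach space (decompose $\mu=\mu^+-\mu^-$, normalize each nonnegative integral by its total mass, and recombine).

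The trajectory $s\mapsto Y_{j,s}^u\psi_0$ is continuous with values in $\mathcal{K}$ (verified inductively on \eqref{DEF_Yp} using $u\in L^1$). Continuity of $B$ and $K$ then makes $B(\mathcal{K})$ and $K(\mathcal{K})$ compact, and Ingredient~A places the integrands of the control and drift terms into the compact sets $G_1=\{e^{\tau A}Bc:\tau\in[0,T],\ c\in\mathcal{K}\}$ and $G_2=\{e^{\tau A}K(c):\tau\in[0,T],\ c\in\mathcal{K}\}$ respectively. Applying Ingredient~B with $\mathrm{d}\mu(s)=u(s)\mathbf{1}_{[0,t]}(s)\,\mathrm{d}s$ (total variation $\leq L$) for the control integral, and with Lebesgue measure restricted to $[0,t]$ (total mass $\leq T$) for the drift integral, sends each into a compact subset of $\X$. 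Hence $\mathcal{Y}_{j+1}^{T,L}$ is contained in a Minkowski sum of three compact sets, closing the induction.

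The main obstacle is the $L^1$ nature of the control: unlike $L^r$ for $r>1$, the closed ball of $L^1$ is not weakly compact, so no direct weak-compactness argument is available. Ingredient~B is the required substitute, converting the total-variation bound on the signed measure associated with $u$ into a precompact convex hull; this is exactly the observation that lets the Dyson expansion technique of \cite{BCC2} survive in the $L^1$ setting.
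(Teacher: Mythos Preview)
Your proof is correct, and follows the same inductive skeleton as the paper's: reduce to $\mathcal{Y}_j^{T,L}$, and in the inductive step observe that the integrands $e^{(t-s)A}BY_{j,s}^u\psi_0$ and $e^{(t-s)A}K(Y_{j,s}^u\psi_0)$ range in fixed compact sets (continuous images of $[0,T]\times\mathcal{K}$). The difference lies in how you extract compactness of the \emph{integrals} from this. The paper builds an explicit $\varepsilon$-net: it covers the compact image sets by finitely many small balls, takes a subordinate partition of unity $(\phi_i)$, and approximates each integral by a finite linear combination $\sum_i\big(\int_0^t u(s)\phi_i(\cdots)\,\mathrm{d}s\big)x_i$, which lands in a bounded subset of a finite-dimensional space. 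You instead package the same idea as your Ingredient~B, invoking Mazur's theorem: the integral of a compact-valued function against a signed measure of total variation at most $L$ lies in $L\cdot\overline{\mathrm{conv}}(G\cup(-G)\cup\{0\})$, which is compact.

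The two arguments are equivalent in substance---the paper's partition-of-unity step is essentially an ad hoc proof of Mazur's theorem in this setting---but your formulation is cleaner and more modular: it isolates Ingredient~B as a reusable lemma and avoids the bookkeeping with the $\varepsilon/(4(L+T))$ radii. The paper's version, on the other hand, is entirely self-contained and does not rely on naming Mazur's theorem. Your observation that the first term $\{e^{tA}\psi_0:t\in[0,T]\}$ is already a compact curve, and that the whole thing is then a Minkowski sum of three compact sets, is a nice streamlining the paper does not make explicit.
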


\begin{proof}
We adapt the proof of \cite{BCC2} (valid for $K=0$) to the general case of a continuous function $K$.
  
Since a finite  sum of relatively compact sets is still relatively compact, it is enough 
to prove the result for $\mathcal{Y}_j^{T,L}$.
We prove this by induction on $j \geq 0$.

For $j=0$, the result is clear.  

Assume that $\mathcal{Y}_j^{T,L}$ is relatively 
compact in $\X$ for some $j \geq 0$. We aim to prove that $\mathcal{Y}_{j+1}^{T,L}$ is
 relatively compact in $\X$ as well. For this, we chose $\varepsilon>0$ and we try to exhibit an $\varepsilon$-net of $\mathcal{Y}_{j+1}^{T,L}$.

Since the mappings 
\[
\begin{array}{llcl}
G_1:&[0,T]\times \X &\longrightarrow & \X\\
&(s,\psi) & \longmapsto & e^{(T-s)A}   B \psi
\end{array}
 \mbox{ \quad and \quad }
\begin{array}{llcl}
G_2:&[0,T]\times \X &\longrightarrow & \X\\
&(s,\psi) & \longmapsto & e^{(T-s)A} 
  K (\psi) 
\end{array}
\]
are continuous,
the sets $G_1([0,T]\times \mathcal{Y}_{j}^{T,L})$ and $G_2([0,T]\times \mathcal{Y}_{j}^{T,L})$ are relatively compact.  
Hence, there exists a finite family $(x_i)_{1\leq i \leq N}$ such that, for 
$\ell=1,2$, 
\[G_\ell([0,T]\times \mathcal{Y}_{j}^{T,L}) \subset \bigcup_{i=1}^N B_{\X}\left (x_i, 
\frac{\varepsilon}{ 4(L+T)} \right).  \]
Let $(\phi_i)_{1\leq i \leq N}$ be a partition of unity associated with the 
 covering of $G_\ell([0,T]\times \mathcal{Y}_{j}^{T,L})$, $\ell=1,2$. That is, 
 the functions $\phi_i$ satisfy $0 \leq \phi_i \leq 1$ and,   for
 every $x$ in $G_\ell([0,T]\times \mathcal{Y}_{j}^{T,L})$,
 $\displaystyle{\sum_{i=1}^N 
\phi_i(x)=1}$ and $\dis \Big\| x-\sum_{i=1}^N \phi_i(x) x_i \Big\|_{\X} <  \frac{\varepsilon}{2(L+T)}$. 

Then, for every $u$ in $L^1([0,T],\mathbf{R})$ such that $\|u\|_{L^1(0,T)}\leq L$, 
\[ \Big \| \int_0^t u(s) e^{(t-s)A} ( B Y^u_{j,s}\psi_0 ) 
\mathrm{d}s -\sum_{i=1}^N \int_0^t  u(s) \phi_i\big(e^{(t-s)A} (  B Y^u_{j,s}\psi_0 \big) x_i \mathrm{d}s \Big \|_{\X}  \leq \frac{L\varepsilon}{2(L+T)},  \]
 and \[ \Big \| \int_0^t  e^{(t-s)A} K( Y^u_{j,s}\psi_0 ) 
\mathrm{d}s -\sum_{i=1}^N \int_0^t   \phi_i(e^{(t-s)A} K( Y^u_{j,s}\psi_0 )) x_i \mathrm{d}s \Big \|_{\X} \leq  \frac{ T\varepsilon }{2(L+T)}. \]
Now  using that  the compact sets $\sum_{i=1}^N [0,L] x_i$ and $\sum_{i=1}^N [0,T] x_i$ admit a $\varepsilon/4$-net $(y_i)_{1\leq i \leq N_2}$, and  the previous estimates, we get 
$\displaystyle{  \mathcal{Y}_{j+1}^{T,L} \subset \bigcup_{i=1}^{N_2} B_{\X}(y_i,\varepsilon)}$,  which concludes the proof. 
\end{proof}

\begin{rem}
In the proof of Proposition~\ref{PRO_Yj_compact}, we  only used the continuity of $K$. In this paper, we assume that $K$ is Lispschitz continuous in order to ensure the global existence of a flow of~\eqref{EQ_Duhamel} and the Dyson expansion~\eqref{Dyson}. In Section~\ref{KG}, we will show that our approach applies to more general nonlinearities, which are only locally (not globally) Lipschitz continuous.
\end{rem}

\subsection{Proof of the nonlinear Ball-Marsden-Slemrod obstructions}\label{SEC_proof_BMS_Lipschitz}

We are now able to complete the proof of Theorem~\ref{THM_BMS_abstract}. \medskip

For  $T>0$ and $L>0$ define
\[
\mathcal{V}^{T,L}= \big\{ \Phi^u(t) \psi_0 \;| \; u \in L^1([0,T]), \|u\|_{L^1([0,T])} \leq L, 0 \leq t \leq T \big\},
\]
and notice that 
\[
\bigcup_{t\geq 0} \bigcup_{u \in L^1} \{\Phi^u(t) \psi_0\}
=
\bigcup_{T \in  \mathbf{N}} \bigcup_{L \in \mathbf{N}}
\mathcal{V}^{T,L}.
\]
Thus it is enough to prove that, for every $T>0$ and every $L>0$, the set $\mathcal{V}^{T,L}$ is relatively compact.

Let $\delta>0$ be given. We aim to find a $\delta$-net of $\mathcal{V}^{T,L} $.

From Propositions~\ref{PRO_majoration_Z} and~\ref{PRO_Dyson_expansion}, since $\dis \big\| \sum_{j=N}^\infty Z^u_{j,T} (\psi_0)\big\|_{\X}$ tends to zero as $N$ tends to infinity uniformly with respect to $u$ in $B_{L^1([0,T],\mathbf{R})}(0,L)$, there exists $N_1$ large enough such that, for every  $u$ in $B_{L^1([0,T],\mathbf{R})}(0,L)$,
\[
\Big \| \sum_{j=N_1}^\infty Z^u_{j,T} (\psi_0)\Big \|_{\X} < \frac{\delta}{2}.
\] 
The set $\mathcal{Y}_{N_1}^{T,L}$ is relatively compact (Proposition~\ref{PRO_Yj_compact}); hence it admits a $\delta/2$-net. 

Thus  \[
\mathcal{V}^{T,L} \subset \Big \{x \in \X\, | \,d_{\X}\Big (x,\mathcal{Y}_{N_1}^{T,L} \Big ) \leq \frac{\delta}{2} \,\Big \} \]
admits a $\delta$-net, which finishes the proof.  \hfill $\square$
 
\section{Applications}

\subsection{The Sine-Gordon equation}
 
 We consider the Sine-Gordon equation which reads
    \begin{equation}\label{Sine-G}
  \left\{
      \begin{aligned}
& \partial_t^2\psi-\partial^2_x\psi=u(t)B(x) \psi-\sin\psi, \quad (t,x) \in \R\times \R, \\
        & \psi(0,.)=\psi_0 \in H^1(\R),   \\
           &  \partial_t\psi(0,.)=\psi_1 \in L^2(\R), 
      \end{aligned}
    \right.
\end{equation}
where $B$ is a given function, and $u\in L_{loc}^1(\R)$ is the control. In the case   $B \equiv 0$, this equation appears  in  relativistic field theory or in the study of mechanical transmission lines. We rewrite this equation as a first order (in time) system, so that it fits the framework of our study. Equation~\eqref{Sine-G}  is equivalent to

    \begin{equation*} 
  \left\{
      \begin{aligned}
&\partial_t\begin{pmatrix}
\psi \\ \phi
\end{pmatrix}
=
\begin{pmatrix}
0&1\\\partial^2_x&0
\end{pmatrix}
\begin{pmatrix}
\psi \\ \phi
\end{pmatrix}
+u(t)B(x)   \begin{pmatrix}
0&0\\1&0
\end{pmatrix}\begin{pmatrix}
\psi \\ \phi
\end{pmatrix}+\begin{pmatrix}
0\\ -\sin\psi
\end{pmatrix},  \quad (t,x) \in \R\times \R, \\
        & (\psi(0,.), \phi(0,.))  =(\psi_0,\psi_1)  \in H^1(\R) \times L^2(\R).
      \end{aligned}
    \right.
\end{equation*}
Then Theorem~\ref{THM_BMS_abstract} directly applies with $\X=H^1(\R) \times L^2(\R)$, $A= \begin{pmatrix}
0&1\\\partial^2_x&0
\end{pmatrix}$, $D(A)=H^2(\R) \times H^1(\R)$, $B\in L^{\infty}(\R)$ and $K(\psi, \phi)=(0; -\sin(\psi))$.

  \subsection{The wave equation in dimension 3}\label{KG}
 The result of Theorem~\ref{THM_BMS_abstract} also applies to nonlinear equations, with local Lipschitz nonlinear terms. We develop here the examples of the wave and Klein-Gordon equations.
 Denote by $\mathcal{M}$ a  compact manifold of dimension 3 without boundary, or $\mathcal{M}=\R^3$.   We consider the defocusing cubic  wave equation 
   \begin{equation}\label{kg}
  \left\{
      \begin{aligned}
      & \partial_t^2\psi-\Delta\psi+m \psi=u(t)B(x) \partial_t\psi-\psi^3, \quad (t,x) \in \R\times \mathcal{M}, \\
        & \psi(0,.)=\psi_0 \in H^1(\mathcal{M}),   \\
           &  \partial_t\psi(0,.)=\psi_1 \in L^2(\mathcal{M}), 
      \end{aligned}
    \right.
\end{equation}
 with $m \geq 0$ and $B \in L^{\infty}(\mathcal{M})$. Positive exact controllability results for such non-linear dynamics   in the case $\mathcal{M}=(0,1)$
  were obtained by Beauchard \cite[Theorem 1]{Beauchard11}.

 Let the control function $u$ be in $ L_{loc}^1(\R)$;  the mild solution reads
  \begin{equation*}
\psi(t)=S_0(t)\psi_0+S_1(t) \psi_1+\int_0^tS_1(t-s) \big(u(s)B(x) \partial_s\psi(s)-\psi^3(s)\big)ds
 \end{equation*}
 where 
   \begin{equation}\label{defS}
 S_0(t)=\cos(t \sqrt{-\Delta+m} ) \;\;\text{ and }\;\; S_1(t)=\frac{\sin(t \sqrt{-\Delta+m} )}{ \sqrt{-\Delta+m}}. 
 \end{equation}

  \subsubsection{The obstruction result for controllability of the wave equation} 
  
  We state the main result of this section, which is the analogue of Theorem~\ref{THM_BMS_abstract} for equation~\eqref{kg}.
  
   \begin{thm}\label{THM_BMS_KG}
For all $(\psi_0,\psi_1)\in H^1(\mathcal{M}) \times L^2(\mathcal{M})$ and $u \in L^1(\R)$,  there exists a unique solution to~\eqref{kg}
\begin{equation*}
\psi\in \mathcal{C}^{0}\big(\R;H^{1}(\mathcal{M})\big)\cap \mathcal{C}^{1}\big(\R;L^{2}(\mathcal{M})\big).
\end{equation*}
 This enables us to define a global flow 
 \[ \begin{array}{llcl}
 \Phi=(\Phi_1,\Phi_2):& H^1(\mathcal{M}) \times L^2(\mathcal{M}) \times L^1(\R) & \longrightarrow &
 \mathcal{C}^{0}\big(\R;H^{1}(\mathcal{M})\big) \times \mathcal{C}^{0} (\R;L^{2}(\mathcal{M})\big)\\
 &(\psi_0,\psi_1,u) & \longmapsto &  (\psi, \psi_t)
 \end{array}.\]  
 
Moreover, for every $(\psi_0,\psi_1) \in H^1(\mathcal{M}) \times L^2(\mathcal{M})$, the  attainable set 
$$
\bigcup_{t \in \R} \bigcup_{u \in L^1} \big\{\Phi^u(t) (\psi_0,\psi_1)\big\}
$$
is contained in a countable union of compact subsets of $H^1(\mathcal{M}) \times L^2(\mathcal{M})$.
\end{thm}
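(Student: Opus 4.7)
The plan is to cast~\eqref{kg} in the abstract first-order framework of Section~\ref{Sect2}, to establish an a priori energy bound, and then to apply Theorem~\ref{THM_BMS_abstract} after truncating the cubic term outside the relevant energy ball. On $\X=H^1(\mathcal{M})\times L^2(\mathcal{M})$, set
$$A\begin{pmatrix}\psi\\ \phi\end{pmatrix}=\begin{pmatrix}\phi\\ (\Delta-m)\psi\end{pmatrix},\quad B\begin{pmatrix}\psi\\ \phi\end{pmatrix}=\begin{pmatrix}0\\ B(x)\phi\end{pmatrix},\quad K\begin{pmatrix}\psi\\ \phi\end{pmatrix}=\begin{pmatrix}0\\ -\psi^3\end{pmatrix},$$
so that~\eqref{kg} becomes~\eqref{EQ_NLS_abstract}. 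Then $A$ generates a $C^0$ group on $\X$ built from $S_0,S_1$ of~\eqref{defS}, and $B\in L(\X)$ since $B\in L^\infty(\mathcal{M})$. The Sobolev embedding $H^1(\mathcal{M})\hookrightarrow L^6(\mathcal{M})$ valid in dimension three yields
$$\|\psi_1^3-\psi_2^3\|_{L^2}\leq C\bigl(\|\psi_1\|_{H^1}^2+\|\psi_2\|_{H^1}^2\bigr)\|\psi_1-\psi_2\|_{H^1},$$
so $K:\X\to \X$ is well defined and locally Lipschitz, but \emph{not} globally Lipschitz; Theorem~\ref{THM_BMS_abstract} does not apply as is to $(\X,A,B,K)$.

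For the global flow I would build a local-in-time mild solution by Banach fixed point on $C([0,\tau];\X)$ using the local Lipschitz bound above, then extend it globally through the defocusing energy
$$E(t)=\frac{1}{2}\int_{\mathcal{M}}\Bigl(|\partial_t\psi|^2+|\nabla\psi|^2+m|\psi|^2+\tfrac{1}{2}|\psi|^4\Bigr)\mathrm{d}x.$$
Formally multiplying~\eqref{kg} by $\partial_t\psi$ and integrating by parts gives $E'(t)=u(t)\int_{\mathcal{M}} B(x)|\partial_t\psi|^2\,\mathrm{d}x$, whence $|E'(t)|\leq 2\|B\|_{L^\infty}|u(t)|E(t)$ and Gronwall produces $E(t)\leq E(0)\exp\bigl(2\|B\|_{L^\infty}\|u\|_{L^1([0,t])}\bigr)$. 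Combined, when $m=0$, with $\|\psi(t)\|_{L^2}\leq \|\psi_0\|_{L^2}+\int_0^t\|\partial_s\psi\|_{L^2}\mathrm{d}s$, this furnishes a uniform a priori bound on $\|(\psi(t),\partial_t\psi(t))\|_\X$ on bounded time intervals with $L^1$-bounded controls, precluding blow-up and producing the global flow $\Phi$ with its continuity properties (time reversibility of the wave equation handles $t<0$).

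For the obstruction, fix $T,L>0$ and let $\mathcal{V}^{T,L}=\bigcup_{0\leq t\leq T}\bigcup_{\|u\|_{L^1}\leq L}\{\Phi^u(t)(\psi_0,\psi_1)\}$, so that the energy bound yields $R=R(T,L,\psi_0,\psi_1)$ with $\mathcal{V}^{T,L}\subset B_\X(0,R)$. Pick a smooth cutoff $\chi:\R^+\to[0,1]$ with $\chi\equiv 1$ on $[0,R+1]$ and $\chi\equiv 0$ on $[R+2,+\infty)$, and set
$$\widetilde K\begin{pmatrix}\psi\\ \phi\end{pmatrix}=\chi(\|\psi\|_{H^1})\begin{pmatrix}0\\ -\psi^3\end{pmatrix}.$$
A case analysis on the positions of $\|\psi_1\|_{H^1}$ and $\|\psi_2\|_{H^1}$ relative to $R+2$, combined with the local Lipschitz estimate on $K$ and the Lipschitz continuity of $\chi$, shows that $\widetilde K$ is \emph{globally} Lipschitz on $\X$ with constant depending only on $R$. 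Hence $(\X,A,B,\widetilde K)$ satisfies Assumption~\ref{ASS_BMS_L1}, Theorem~\ref{THM_BMS_abstract} applies to the truncated system, and its attainable set from $(\psi_0,\psi_1)$ lies in a countable union of compacts of $\X$. By uniqueness of mild solutions and since any trajectory of~\eqref{kg} with $\|u\|_{L^1}\leq L$ stays in $B_\X(0,R)\subset\{\chi\equiv 1\}$ on $[0,T]$, the two flows coincide on $\mathcal{V}^{T,L}$; taking the union over $T,L\in\N$ concludes the proof. The main obstacle is precisely this truncation-and-comparison step: one must verify that $\widetilde K$ is genuinely globally Lipschitz on all of $\X$ (the delicate case being $\|\psi_1\|_{H^1}\leq R+2<\|\psi_2\|_{H^1}$, where the linear vanishing of $\chi$ near $R+2$ has to compensate for the non-vanishing of $\psi_1^3$), and that the threshold $R$ is fixed a priori from $(T,L,\psi_0,\psi_1)$ so that the original and truncated dynamics are indistinguishable on the prescribed range of $(t,u)$.
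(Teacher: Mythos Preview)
Your argument is correct, and it takes a genuinely different route from the paper. For the global well-posedness part you and the paper proceed identically (local fixed point, energy bound via Gronwall, iteration, plus the $L^2$ bound on $\psi$ when $m=0$). For the obstruction, however, the paper does \emph{not} truncate the nonlinearity. Instead it rebuilds the machinery of Section~\ref{Sect2} directly for the cubic equation: it shows a Dyson expansion valid only on a short time interval $[0,\tau]$ (Proposition~\ref{PRO_Dyson_KG}), proves the analogue of Proposition~\ref{PRO_Yj_compact} for the wave iterates using continuity of the maps $(s,\phi)\mapsto S_1(T-s)B\phi$ and $(s,\psi)\mapsto S_1(T-s)\psi^3$, deduces relative compactness of $\mathcal{V}^{\tau,L}$, and then inducts on the number of $\tau^\ast$-subintervals, using the energy bound to ensure a uniform lower bound on $\tau$ along the closure of $\mathcal{V}^{T,L}$ (Propositions~\ref{PRO_tau_uniforme} and~\ref{PRO_compactness_KG_T}).

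Your truncation-and-comparison strategy is more economical: once the a~priori energy bound gives a radius $R=R(T,L,\psi_0,\psi_1)$ confining all trajectories in $\mathcal{V}^{T,L}$, the cutoff $\widetilde K$ is globally Lipschitz (the ``delicate'' case you flag is handled by interpolating along the segment to the first point of norm $R+2$, where $\widetilde K$ vanishes), so Theorem~\ref{THM_BMS_abstract} applies verbatim to the truncated system, and uniqueness forces the two flows to coincide on $\mathcal{V}^{T,L}$. This bypasses the short-time Dyson expansion, the $F^k$-iterate lemma, and the time-slab induction, at the price of invoking the abstract theorem once per pair $(T,L)\in\N^2$ with a different $\widetilde K_{R(T,L)}$---still a countable union. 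The paper's approach, by contrast, is self-contained (it re-proves compactness directly for the cubic model) and illustrates that only \emph{continuity} of the nonlinearity is needed in the compactness step, which is a point emphasized in Remark after Proposition~\ref{PRO_Yj_compact}; your route relies on the Lipschitz structure of the truncation and does not make that distinction visible.
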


While we decided to illustrate our method for the equation~\eqref{kg}, our approach can be applied to other wave-type equations, such as  
\[ \partial_t^2\psi-\Delta\psi+m \psi=u(t)B(x)  \psi-\psi^3, \]
with a given potential  $B \in L^{3}(\M)$. We omit the details.

  \subsubsection{Local and global existence results} 

Since  equation~\eqref{kg} is reversible, in the sequel, we onöy consider non-negative times.   Let $T>0$,  $u \in L^1([0,T])$ and $t_0\geq 0$ be given.  We define by induction on $p\geq 0$, 
\begin{equation*} 
\left \{
  \begin{aligned}
& \widetilde{Y}_{0,t,t_0}^u=0\\
& \widetilde{Y}_{p+1,t,t_0}^u(\psi_0,\psi_1)  =S_0(t)\psi(t_0)+S_1(t) \partial_t\psi(t_0) + \int_{0}^{t} \!\!\!S_1(t-s) \left \lbrack  u(s+t_0) B(x) \partial_s\widetilde{Y}_{p,s,t_0}^u
 - (\widetilde{Y}^u_{p,s,t_0})^3 \right \rbrack \mathrm{d}s
      \end{aligned}
\right.
\end{equation*}
with $\widetilde{Y}_{p,s,t_0}^u=\widetilde{Y}_{p,s,t_0}^u(\psi_0,\psi_1)$, and where $S_0$ and $S_1$ are defined in~\eqref{defS}. \medskip

We now  state a global existence result, which is an application of the Picard fixed point theorem.
 
 \begin{proposition}\label{GWP-KG}
 \begin{enumerate}[$(i)$]
\item For all $(\psi_0,\psi_1)\in H^1(\mathcal{M}) \times L^2(\mathcal{M})$ there exists a unique solution to~\eqref{kg}
\begin{equation*}
\psi\in \mathcal{C}^{0}\big(\R;H^{1}(\mathcal{M})\big)\cap \mathcal{C}^{1}\big(\R;L^{2}(\mathcal{M})\big).
\end{equation*}
\item Moreover,  for all $T>0$, for all $L>0$ and $u$ such that $\int_0^T|u(s)|ds\leq L$,
$$\dis \sup_{0 \leq t \leq T}
 \left \|(\psi,\partial_{t}\psi)(t)\right \|_{H^1(\mathcal{M}) 
 \times L^2(\mathcal{M})}\leq  C\left(\|\psi_0\|_{H^1}, \|\psi_1\|_{L^2}, L,T\right),$$
 where $C$ is a  continuous  function.
\item  Furthermore, for all $T>0$, and $L>0$, there exist $k \geq 1$, $0<c_0<1$ and a 
continuous   function 
$\tau=\tau(\|\psi_0\|_{H^1}, \|\psi_1\|_{L^2}, L,T)>0$ 
such that, for all $0\leq t_0 \leq T$,  $p\geq 0$ and $u$ 
with $\int_0^T|u(s)|ds\leq L$,
    \begin{equation}\label{picard}
     \sup_{ t  \in [0, \tau]}\|\big(\psi(t+t_0)-\widetilde{Y}_{kp,t,t_0}^u,\partial_{t}\psi(t+t_0)-\partial_{t}\widetilde{Y}_{kp,t,t_0}^u\big)\|_{H^1(\mathcal{M}) \times L^2(\mathcal{M})}\leq  Cc_0^p.
     \end{equation} \label{3.2.iii}
 \end{enumerate}
 \end{proposition}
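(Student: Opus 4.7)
I would establish (i)--(ii) by combining a standard Picard local existence argument in $H^1(\mathcal{M})\times L^2(\mathcal{M})$ with an energy estimate yielding a global a priori bound, and then deduce (iii) from a Dyson-type expansion run on short time intervals. For local well-posedness I apply the contraction mapping principle to the Duhamel operator associated with~\eqref{kg} on a small ball of $\mathcal{C}^0([0,\tau],H^1)\cap \mathcal{C}^1([0,\tau],L^2)$; the Sobolev embedding $H^1(\mathcal{M})\hookrightarrow L^6(\mathcal{M})$, which holds in dimension $3$, gives $\|\psi^3-\widetilde\psi^3\|_{L^2}\lesssim (\|\psi\|_{H^1}^2+\|\widetilde\psi\|_{H^1}^2)\|\psi-\widetilde\psi\|_{H^1}$, so the cubic term is locally Lipschitz from $H^1$ to $L^2$, and together with $u\in L^1$ this yields the required contraction for $\tau$ small enough.

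For the global a priori bound I use the modified energy
\[
E(t)=\frac{1}{2}\int_{\mathcal{M}}\bigl(|\partial_t\psi|^2+|\nabla\psi|^2+m|\psi|^2\bigr)\,\mathrm{d}x+\frac{1}{4}\int_{\mathcal{M}}\psi^4\,\mathrm{d}x.
\]
A direct computation using~\eqref{kg} and integration by parts shows $\tfrac{d}{dt}E(t)=\int_{\mathcal{M}} u(t)B(x)(\partial_t\psi)^2\,\mathrm{d}x\le 2\|B\|_{L^\infty}|u(t)|\,E(t)$, the cubic nonlinearity contributing precisely the time derivative of the $\psi^4$-term, hence cancelling. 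Gr\"onwall then yields $E(t)\le E(0)\exp\bigl(2\|B\|_{L^\infty}\|u\|_{L^1([0,t])}\bigr)$, which (after a harmless $L^2$ estimate when $m=0$) proves (ii) and prevents finite-time blow-up, thereby completing (i).

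For (iii), pick $R=C(\|\psi_0\|_{H^1},\|\psi_1\|_{L^2},L,T)$ from (ii), and work on the $H^1\times L^2$-ball of radius $2R$, on which $\phi\mapsto\phi^3$ is Lipschitz from $H^1$ to $L^2$ with some constant $C_R$. Choose $\tau>0$ continuous in the parameters and small enough so that the iterates $\widetilde Y_{p,\cdot,t_0}^u$ stay in this ball on $[0,\tau]$ for every $t_0\in[0,T]$. An induction along the lines of Proposition~\ref{PRO_majoration_Z} then gives
\[
\sup_{t\in[0,\tau]}\bigl\|\bigl(\widetilde Y_{j+1,t,t_0}^u-\widetilde Y_{j,t,t_0}^u,\,\partial_t\widetilde Y_{j+1,t,t_0}^u-\partial_t\widetilde Y_{j,t,t_0}^u\bigr)\bigr\|_{H^1\times L^2}\le C_1\,\frac{\Lambda^j}{j!},
\]
with $\Lambda=\|B\|_{L^\infty}L+C_R\tau$, and summing shows that $\widetilde Y_p$ converges to the unique mild solution with tail bounded by $C_1 e^\Lambda\Lambda^N/N!$ at index $N$.

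The main obstacle is that $\|u\|_{L^1([t_0,t_0+\tau])}$ may be as large as $L$ on an arbitrarily small interval, so the underlying Duhamel map need not itself be a contraction and geometric convergence cannot be obtained directly; this is precisely the role of the integer $k$. Using $N!\ge(N/e)^N$ with $N=kp$ one has $\Lambda^{kp}/(kp)!\le(e\Lambda/(kp))^{kp}$, and choosing $k\ge 2e\Lambda$ yields the desired bound $Cc_0^p$ with $c_0=2^{-k}<1$, uniformly in $t_0\in[0,T]$ and in $u$ with $\|u\|_{L^1([0,T])}\le L$, as required by~\eqref{picard}.
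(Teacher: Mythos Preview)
Your treatment of (i) and (ii) coincides with the paper's: local well-posedness by contraction using the Sobolev embedding $H^1\hookrightarrow L^6$, the energy identity $\frac{d}{dt}E=u(t)\int_{\mathcal M}B(\partial_t\psi)^2$, Gr\"onwall, and an extra $L^2$ estimate when $m=0$.

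For (iii) you take a genuinely different route. The paper does \emph{not} try to control the successive differences $\widetilde Y_{j+1}-\widetilde Y_j$ directly. Instead it proves an auxiliary lemma (Lemma~\ref{lemk}) giving, for every $k$,
\[
\|F^k(\psi)-F^k(\phi)\|_t\le\Big[\frac{(CL(t+t_0))^k}{k!}+T_1Q_k\Big]\|\psi-\phi\|_{T_1},
\]
and then chooses $k$ so that $(CL)^k/k!\le 1/2$, which makes $F^k$ itself a contraction on a ball $X_{T_1,R_1}$ for $T_1$ small. The geometric rate~\eqref{picard} is then just the Picard estimate for the contraction $F^k$. Your approach instead bounds each $\widetilde Z_j$ by $C_1\Lambda^j/j!$ as in Proposition~\ref{PRO_majoration_Z} and only afterwards groups the tail at index $kp$ via Stirling. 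This is more elementary (no separate lemma on the $k$-fold iterate), and it yields the stronger information that the individual differences decay factorially, not merely that their $k$-blocks decay geometrically. The paper's route, on the other hand, cleanly separates the $u$-contribution (controlled by iteration) from the cubic contribution (controlled by $T_1$), and makes it transparent why the fixed point of $F^k$ is the actual solution: they use (ii) to check that $\psi$ itself lies in $X_{T_1,R_1}$.

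One point in your argument needs care. You assert that by taking $\tau$ small the iterates $\widetilde Y_{p,\cdot,t_0}^u$ remain in the ball of radius $2R$; but as you yourself note, $\|u\|_{L^1([t_0,t_0+\tau])}$ may equal $L$ however small $\tau$ is, so the single-step map $F$ can inflate the norm by a factor like $1+c\|B\|_{L^\infty}L$, which is not cured by shrinking $\tau$. What does work is a \emph{simultaneous} induction: assume $\widetilde Y_0,\dots,\widetilde Y_p\in B(0,R_\ast)$ and $\|\widetilde Z_j\|_t\le C_1(\|B\|_{L^\infty}\!\int_0^t|u|+C_{R_\ast}t)^j/j!$ for $j<p$; the inductive step then gives the bound for $j=p$ and hence $\|\widetilde Y_{p+1}\|\le C_1e^{\Lambda}$, so the loop closes provided $R_\ast\ge C_1e^{\Lambda}$. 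Since $C_1\lesssim R$ (the free evolution of data bounded by (ii)) and $\Lambda=\|B\|_{L^\infty}L+C_{R_\ast}\tau$, one must take $R_\ast$ of order $Re^{\|B\|_{L^\infty}L}$, not $2R$, and then choose $\tau$ so that $C_{R_\ast}\tau$ is bounded. With that adjustment your argument goes through.
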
 
 
 In the previous result, it is crucial that we obtain a  time $\tau=\tau(
\|\psi_0\|_{H^1}, \|\psi_1\|_{L^2}, L,T)$ which only depends on the 
 norms of $\psi_0$, $\psi_1$ and $u$ (and not $\psi_0$, $\psi_1$ or $u$ themselves). This fact will be used in the compactness argument (see Section~\ref{SEC_compactness_argument_KG}).

\begin{proof}    {\it A first local existence result:}  Let $t_0\geq 0$. We prove a local in time  existence result for the problem 
 \begin{equation}\label{kgt0}
  \left\{
      \begin{aligned}
      & \partial_t^2\widetilde{\psi}-\Delta\widetilde{\psi}+m \widetilde{\psi}=u(t)B(x)\partial_t \widetilde{\psi}-\widetilde{\psi}^3, \quad (t,x) \in \R\times \mathcal{M}, \\
        & \widetilde{\psi}(t_0,.)={\widetilde{\psi}}_0 \in H^1(\mathcal{M}),   \\
           &  \partial_t\widetilde{\psi}(t_0,.)=\widetilde{\psi}_1 \in L^2(\mathcal{M}).
      \end{aligned}
    \right.
\end{equation}
We consider the map 
  \begin{equation*} 
F(\psi)(t)=S_0(t)\widetilde{\psi}_0+S_1(t)\widetilde{\psi}_1 + \int_{0}^{t} S_1(t-s) \left \lbrack  u(s+t_0) B(x)\partial_s \psi(s)
 - (\psi(s))^3 \right \rbrack \mathrm{d}s,
 \end{equation*} 
and we will show that, for $t>0$ small enough, it is a contraction in some 
Banach space. Then, by the Picard theorem, there will exist a unique fixed point 
$\psi$, and $\widetilde{\psi}(t)=\psi(t-t_0)$ will be the unique solution 
to~\eqref{kgt0}. \medskip

We define the norm 
$\|\psi\|_T= \| \psi \|_{L^{\infty}_TH^1}+\| \partial_t\psi \|_{L^{\infty}_TL^2}$ 
and the space 
  \begin{equation*}
X_{T,R}= \big\{ \|\psi\|_T\leq R\big\},
 \end{equation*} 
 with $R>0$ and $T>0$ to be determined.

By  the  Sobolev embedding $H^1(\mathcal{M}) \subset L^6(\mathcal{M})$ (see 
Proposition~\eqref{PRO_Sobolev_embedding} with $p=2$ and $n=3$),  there exists $c=c(m, T)>0$ such that 
   \begin{eqnarray}\label{ineq37}
 \| F(\psi)\|_T &\leq& 2 (\|\widetilde\psi_0\|_{H^1}+ \|\widetilde\psi_1\|_{L^2})+
c \int_0^T\big( \|  u(s+t_0)  B\partial_s \psi \|_{L^2}+  \|  \psi(s) \|^3_{L^{6}
 }\big)ds 
  \nonumber \\
 &\leq& 2 (\|\widetilde\psi_0\|_{H^1}+ \|\widetilde\psi_1\|_{L^2})+c\big(\int_0^T  |
 u(s+t_0)| ds \big)\big\| B \big\|_{L^{\infty}} \big\| \partial_t\psi \big\|_{L_T^{\infty}L^2}+cT 
 \|  \psi \|^3_{L_T^{\infty}H^1} . 
 \end{eqnarray} 
Let us set   $R =4( \|\widetilde\psi_0\|_{H^1}+ \|\widetilde\psi_1\|_{L^2})$. 
Then we choose  $T_1=c_1 R^{-2}$ with $c_1>0$ small enough such that 
$cT_1 R^2 \leq 1/4$ and we choose $T_2>0$ such that  
$c\int_0^{T_2}  |u(s+t_0)| ds \leq \big\| B \big\|^{-1}_{L^{\infty}}/4$. Therefore, for 
$T=\min{(T_1,T_2)}$, $F$ maps $X_{T,R}$ into itself. With similar estimates we 
can show that $F$ is a contraction in $X_{T,R}$, namely 
    \begin{equation*}
 \| F(\psi_1)- F(\psi_2)\|_T  
 \leq  \big[cT R^2 + c\big(\int_0^T  |u(s+t_0)| ds \big)\big\| B \big\|_{L^{\infty}} 
 \big]  \| \psi_1- \psi_2\|_T.   
  \end{equation*} 
  As a consequence, there exists a unique, local in time solution to~\eqref{kgt0}, 
  with the time of existence, $\tau$, depending on  the norms of 
  $\widetilde\psi_0$, $\widetilde\psi_1$ and $u$.   \medskip

  {\it Energy bound:}   We define
\begin{equation*}
E(\psi)(t)=\frac12 \int_{\mathcal{M}} \big((\partial_t \psi)^2+|\nabla \psi|^2 
+m \psi^2\big)+\frac14 \int_{\mathcal{M}}  \psi^4.
\end{equation*}
By differentiation with respect to time, we get 
    \begin{eqnarray*} 
         \frac{d}{dt}  E(\psi)(t) &=   &  \int_{\mathcal{M}} \partial_t{ \psi}
         \big(\partial_t^2\psi-\Delta \psi +m\psi+ \psi^3   \big)dx \nonumber \\
       &=& u(t) \int_{\mathcal{M}} \partial_t{ \psi}. B  \partial_t{ \psi}   dx .
                  \end{eqnarray*}
Next, since $B \in L^{\infty}(\mathcal{M})$, we get  
    \begin{eqnarray*} 
              \frac{d}{dt}  E(\psi)(t)   &\leq    & 
               |u(t)|  \| B\|_{L^{\infty}} \| \partial_t \psi\|^2_{L^2} \\
        &\leq & C |u(t)| E(\psi)(t)
                    \end{eqnarray*}
  which implies 
   \begin{equation}\label{boundEnergy2}
   E(\psi)(t) \leq E(\psi)(0) \e^{C\int_0^t |u(s)|ds}.
      \end{equation}
 In the particular case $m=0$, the energy $E$ does not control the term $\int_{\mathcal{M}} \psi^2$, and we bound this latter term as follows. We set $M(\psi)(t)=\big(\int_{\mathcal{M}} \psi^2\big)^{1/2}$. Then
 $$  \frac{d}{dt}  M(\psi)(t)  \leq \| \partial_t \psi \|_{L^2} \leq  2 E^{1/2}(\psi)(t),$$
and by integration in time together with~\eqref{boundEnergy2} we obtain 
   \begin{equation}\label{boundEnergy}
   E(\psi)(t)+ \int_{\mathcal{M}} \psi^2 \leq  C_0(t, \int_0^t |u(s)|ds,  \|\psi_0\|_{H^1},  \|\psi_1\|_{L^2}).
      \end{equation}

   {\it Proof of $(i)$ and $(ii)$:}  Assume that one can solve~\eqref{kgt0} on 
   $[0,T^{\star})$, starting from $t_0=0$. By~\eqref{boundEnergy}, there is a   
   time $T_1^{\star}>0$ such that $c T_1^{\star}   (R^{\star})^2 \leq 1/4$ with 
   $R^{\star} =4c (\|\psi\|_{L^{\infty}_{T^{\star}}H^1}+\|\partial_t\psi\|_{L^{\infty}_{T^{\star}}L^2})$. Then we choose
   $T^{\star}_2>0$ such that  
    \[\dis {c\left ( \int_{T^{\star}-\frac{T_2^{\star}}{2}}^{T^{\star}+
    \frac{T_2^{\star}}{2}}  |u(s)| ds \right ) \big\| B  \big\|_{L^{\infty}}  \leq 1/4}.\]
      As a consequence, using the arguments of the previous (local) step, we are able 
      to solve the equation~\eqref{kgt0}, with an initial condition at 
      $t_0=T^{\star}-{\min(T_1^{\star},T_2^{\star})}/{2}$, on the  time interval 
      $[T^{\star}-{\min(T_1^{\star},T_2^{\star})}/{2},T^{\star}+
      {\min(T_1^{\star},T_2^{\star})}/{2}]$. 
      This shows that the maximal solution is global in time.  \medskip
   
      {\it Proof of $(iii)$:} To prove this last statement, we will find a time 
      of existence which does not depend on $t_0 \in [0,T]$ and which only 
      depends on $u$ through the quantity $\int_0^T|u(s)|ds$.  
      Assume that $\int_0^T|u(s)|ds\leq L$.
   
 For $k\geq 0$,   we denote by $F^k=F \circ F \circ \cdots \circ F$ the $k$th 
 iterate of $F$. From~\eqref{bornek} and~\eqref{contratk} (see Lemma~\ref{lemk} below) we obtain the bounds (with $L=L(T)$)
   \begin{equation*}\label{bornek1}
 \|F^k(\psi)\|_{T_1} \leq C_k(L, \|\psi\|_0)+\frac{\big(CL\big)^{k}}{k!}  
 \|\psi\|_{{T_1}}+T_1P_k(T_1,L, \|\psi\|_{T_1})
 \end{equation*}
 and 
  \begin{equation*}\label{contratk1}
 \|F^k(\psi)-F^k(\phi)\|_{T_1} \leq  \Big[\frac{\big(CL\big)^{k}}{k!} + 
 T_1Q_k(T_1,L, \|\psi\|_{{T_1}},\|\phi\|_{T_1} )        
 \Big] \|\psi-\phi\|_{{T_1}}.
 \end{equation*}
 Set $k \geq 0$ such that $\frac{(CL)^{k}}{k!}  \leq 1/2$.  Let 
 $R_1=\max\big(2C_k, C_0\big) $, where $C_k=C_k(L, \|\psi\|_0)$ is given in~\eqref{bornek} and  $C_0=C_0(T,L,\|\psi\|_0)$ is given in \eqref{boundEnergy}. Set 
   \begin{equation*}
X_{T_1,R_1}= \big\{ \|\phi\|_{T_1}\leq R_1\big\}.
 \end{equation*} 
  Then from the two previous estimates we infer that 
  $F^k : X_{T_1,R_1} \longrightarrow X_{T_1,R_1}$ is a contraction, provided that 
  $T_1=T_1(L, R_1)$ is small enough. As a consequence, there exists a unique 
  solution in $X_{T_1,R_1}$ to the equation $\phi=F^k(\phi)$. However, it is not clear whether 
  $F$ does  map $X_{T_1,R_1}$ into  $X_{T_1,R_1}$, and we can not conclude 
  directly that $\phi=F(\phi)$, in other words that $\phi$ satisfies~\eqref{kg}. By the 
  global well-posedness result, there exists  a unique  $\psi=F(\psi)$ for 
  $t \in [0,T_1]$. Let us prove that $\phi \equiv \psi $ on   $[0,T_1]$. Observe 
  that we have $\psi=F^k(\psi)$. To conclude the proof, by uniqueness of the 
  fixed point of $F^k$ in  $X_{T_1,R_1}$, it is enough to check that 
  $\psi \in X_{T_1,R_1}$. By~\eqref{boundEnergy}, 
   $\|\psi\|_{T_1}\leq C_0(T,L,\|\psi\|_0)\leq R_1$, hence the result.
  
   Finally the bound~\eqref{picard} directly follows from the Picard iteration procedure, since 
   \[\widetilde{Y}_{k(p+1),t,t_0}^u(\psi_0,\psi_1) =
   F^k\big(\widetilde{Y}_{kp,t,t_0}^u(\psi_0,\psi_1)\big).\]
\end{proof}

Recall that   $\|\psi\|_T= 
\| \psi \|_{L^{\infty}_TH^1}+\| \partial_t\psi \|_{L^{\infty}_TL^2}$.
 
 \begin{lem}\label{lemk}
 Let $0<T_1\leq T$. For $0\leq t\leq T$, set $L(t)=\int_0^t|u(s)|ds$ and $L=L(T)$. Then 
 there exists a constant $C>0$ such that for all $k\geq 0$ and 
 $0 \leq t +t_0\leq T$, there exist polynomials $C_k$, $P_k$ and $Q_k$ such that 
 \begin{equation}\label{bornek}
 \|F^k(\psi)\|_{t} \leq C_k(L, \|\psi\|_0)+\frac{\big(CL(t+t_0)\big)^{k}}{k!}  
 \|\psi\|_{T_1}+T_1P_k(T_1,L, \|\psi\|_{T_1})
 \end{equation}
 and 
  \begin{equation}\label{contratk}
 \|F^k(\psi)-F^k(\phi)\|_{t} \leq  \Big[\frac{\big(CL(t+t_0)\big)^{k}}{k!} + 
 T_1Q_k(T_1,L, \|\psi\|_{T_1},\|\phi\|_{T_1} )        
 \Big] \|\psi-\phi\|_{T_1}.
 \end{equation}
 \end{lem}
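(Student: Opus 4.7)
\textbf{Proof plan for Lemma~\ref{lemk}.}  I would prove both \eqref{bornek} and \eqref{contratk} simultaneously by induction on $k\geq 0$. The base case $k=0$ is the trivial identity $F^0=\mathrm{Id}$ with $C_0=P_0=Q_0=0$ and the convention $(CL(t+t_0))^0/0!=1$. The case $k=1$ is (essentially) already carried out in the estimate~\eqref{ineq37}: using the $L^2\to H^1$ boundedness of $S_1(t)$, the Sobolev embedding $H^1(\M)\hookrightarrow L^6(\M)$, and $\int_0^t|u(s+t_0)|\,ds=L(t+t_0)-L(t_0)\leq L(t+t_0)$, one gets
$$\|F(\psi)\|_t\leq C(\|\widetilde\psi_0\|_{H^1}+\|\widetilde\psi_1\|_{L^2})+C\|B\|_{L^\infty}\,L(t+t_0)\,\|\psi\|_{T_1}+Ct\,\|\psi\|_{T_1}^{3},$$
so \eqref{bornek} holds with $C_1=C\|\psi\|_0$, linear factor $C\|B\|_{L^\infty}L(t+t_0)$, and $P_1(T_1,L,r)=Cr^3$.

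\textbf{Inductive step for \eqref{bornek}.} Writing
$$F^{k+1}(\psi)(t)-S_0(t)\widetilde\psi_0-S_1(t)\widetilde\psi_1=\int_0^t S_1(t-s)\bigl[u(s+t_0)B\,\partial_s F^k(\psi)(s)-(F^k(\psi))^3(s)\bigr]ds,$$
I split the analysis into a linear and a nonlinear contribution. For the \emph{linear} term, the $L^2\to H^1$ bound on $S_1$ yields
$$\Bigl\|\textstyle\int_0^t S_1(t-s)u(s+t_0)B\,\partial_s F^k(\psi)(s)\,ds\Bigr\|_{H^1}+\Bigl\|\partial_t(\cdots)\Bigr\|_{L^2}\leq C\|B\|_{L^\infty}\int_0^t|u(s+t_0)|\,\|F^k(\psi)\|_s\,ds.$$
Inserting the inductive bound for $\|F^k(\psi)\|_s$, the key contribution comes from the middle term, which (after the substitution $\sigma=s+t_0$ and using $L'(\sigma)=|u(\sigma)|$) is bounded by
$$\frac{C^{k+1}}{k!}\|\psi\|_{T_1}\int_{t_0}^{t+t_0}|u(\sigma)|L(\sigma)^{k}\,d\sigma=\frac{C^{k+1}}{(k+1)!}\|\psi\|_{T_1}\bigl[L(t+t_0)^{k+1}-L(t_0)^{k+1}\bigr]\leq\frac{(CL(t+t_0))^{k+1}}{(k+1)!}\|\psi\|_{T_1}.$$
The remaining pieces ($C_k\int|u|$ and $T_1P_k\int|u|$, both bounded by constants times $L$) are absorbed into a new polynomial $C_{k+1}(L,\|\psi\|_0)$ plus a $T_1 P_{k+1}(T_1,L,\|\psi\|_{T_1})$ term. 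For the \emph{nonlinear} term, Sobolev yields $\|\varphi^3\|_{L^2}\leq C\|\varphi\|_{H^1}^3$, so
$$\Bigl\|\textstyle\int_0^t S_1(t-s)(F^k(\psi))^3\,ds\Bigr\|\leq C\,t\,\|F^k(\psi)\|_{T_1}^3,$$
and cubing the inductive bound gives a polynomial in $(T_1,L,\|\psi\|_{T_1})$ multiplied by $T_1$, which goes into $T_1P_{k+1}$. Collecting everything produces \eqref{bornek} at level $k+1$.

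\textbf{Inductive step for \eqref{contratk}.} The argument is parallel. For the linear difference one gets the same iterated integral $\int|u(s+t_0)|\,\|F^k(\psi)-F^k(\phi)\|_s\,ds$ and the same combinatorial gain $\frac{(CL(t+t_0))^{k+1}}{(k+1)!}\|\psi-\phi\|_{T_1}$. For the cubic difference I use the factorisation $a^3-b^3=(a-b)(a^2+ab+b^2)$ together with Hölder and Sobolev:
$$\|(F^k\psi)^3-(F^k\phi)^3\|_{L^2}\leq C\bigl(\|F^k\psi\|_{H^1}^2+\|F^k\phi\|_{H^1}^2\bigr)\|F^k\psi-F^k\phi\|_{H^1},$$
which after integration in $s\in[0,t]$ produces a factor $t\leq T_1$ times a polynomial $Q_{k+1}(T_1,L,\|\psi\|_{T_1},\|\phi\|_{T_1})$ coming from the inductive bound on $\|F^k\psi\|_{T_1}$, $\|F^k\phi\|_{T_1}$ and on the contraction constant.

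\textbf{Main obstacle.} The delicate point is not the Sobolev/energy estimates, which are standard, but the bookkeeping ensuring that \emph{only the linear term contributes} to the sharp factorial $\frac{(CL(t+t_0))^{k+1}}{(k+1)!}$. This hinges on the identity $\int_{t_0}^{t+t_0}|u(\sigma)|L(\sigma)^k d\sigma=\tfrac{1}{k+1}[L(t+t_0)^{k+1}-L(t_0)^{k+1}]$, i.e.\ integrating $L^k\,dL$. Everything else must be carefully routed into either the $C_k$ term (independent of $\psi$) or the $T_1 P_k$ / $T_1 Q_k$ term, where the cubic nonlinearity — which does \emph{not} carry a small $|u|$ weight — is tamed by the prefactor $t\leq T_1$ instead. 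Preserving the factorial is what allows the choice of $k$ in the proof of Proposition~\ref{GWP-KG}(iii) with $(CL)^k/k!\leq 1/2$, uniformly in $t_0\in[0,T]$.
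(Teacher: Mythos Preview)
Your proposal is correct and follows essentially the same route as the paper: induction on $k$, the basic estimate of the type~\eqref{ineq37} for $F^{k+1}$ in terms of $F^k$, the factorial gain coming from $\int_0^t |u(s+t_0)|\,L(s+t_0)^k\,ds \leq \frac{L(t+t_0)^{k+1}}{k+1}$, and the cubic term absorbed into $T_1 P_{k+1}$ via Sobolev; the paper likewise declares the proof of~\eqref{contratk} ``similar and omitted,'' so your $a^3-b^3$ factorisation is exactly the intended argument. The only point on which the paper is slightly more explicit is that the initial-data contribution $\|\widetilde\psi_0\|_{H^1}+\|\widetilde\psi_1\|_{L^2}$ depends on $t_0$ (since $(\widetilde\psi_0,\widetilde\psi_1)=(\psi(t_0),\partial_t\psi(t_0))$), and one must invoke the a~priori energy bound~\eqref{boundEnergy} to get $\|\widetilde\psi_0\|_{H^1}+\|\widetilde\psi_1\|_{L^2}\leq D(L,\|\psi\|_0)$ uniformly in $t_0\in[0,T]$; your line ``$C_1=C\|\psi\|_0$'' tacitly assumes this, so just make the reference to~\eqref{boundEnergy} explicit.
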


\begin{proof}
Let us prove~\eqref{bornek} by induction. For $k=0$ the result holds true. Let $k \geq 0$ so
that we have~\eqref{bornek}. As in~\eqref{ineq37}, we get 
   \begin{equation}\label{k+1}
 \| F^{k+1}(\psi)\|_t \leq   2(\|\widetilde\psi_0\|_{H^1}+
 \|\widetilde\psi_1\|_{L^2}) +c\big\| B \big\|_{L^{\infty}}  \big(\int_0^t  |u(s+t_0)| 
 \big\| F^{k}(\psi)\big\|_{s}ds \big)
 +cT_1 \|  F^{k}(\psi) \|^3_{T_1},
 \end{equation}
 where $c>0$ is a universal constant. Moreover, by~\eqref{boundEnergy}, 
 \[ \|\widetilde\psi_0\|_{H^1}+\|\widetilde\psi_1\|_{L^2}  \leq D(L ,\|\psi\|_0).
 \]
 Next, by~\eqref{bornek}
  \begin{multline*}
 \int_0^t  |u(s+t_0)| \big\| F^{k}(\psi)\big\|_{s}ds \leq \\
    \begin{aligned}
& \leq    C_k(L, \|\psi\|_0)  L+ \|\psi\|_{T_1}\int_0^t  |u(s+t_0)|   
\frac{\big(CL(s+t_0)\big)^{k}}{k!}   ds +
T_1LP_k(T_1,L, \|\psi\|_{T_1}) \\
& \leq  C_k(L, \|\psi\|_0) L+   C^k\frac{\big(L(t+t_0)\big)^{k+1}}{(k+1)!}      \|\psi\|_{T_1}
+T_1LP_k(T_1,L, \|\psi\|_{T_1}).
    \end{aligned}
 \end{multline*}
 The term  $\|  F^{k}(\psi) \|^3_{T_1}$ is directly controlled  
 by~\eqref{bornek}. Now  we make the choice $C=c\big\| B \big\|_{L^{\infty}}$, 
 and, thanks  to~\eqref{k+1} we get~\eqref{bornek} for $k+1$.
 
 The proof of~\eqref{contratk} is similar and omitted.
\end{proof}\medskip

As in the abstract result, a major ingredient of the proof is a  Dyson expansion 
of the form~\eqref{Dyson}. However, since the nonlinearity is stronger than in 
our abstract result, the expansion only holds for finite times. Set

\[\widetilde{Z}_{p,t,t_0}^u(\psi_0,\psi_1) := \widetilde{Y}_{k(p+1),t,t_0}^u(\psi_0,\psi_1) -\widetilde{Y}_{kp,t,t_0}^u(\psi_0,\psi_1),\]
 where $k\geq 0$ is given by the proof of Proposition~\ref{GWP-KG}.

 \begin{proposition}\label{PRO_Dyson_KG}
Let $T>0$ and $u \in L^1([0,T],\R)$ such that $\int_0^T|u(s)|ds\leq L$. Consider 
$\tau=\tau(\|\psi_0\|_{H^1}, \|\psi_1\|_{L^2}, L,T )>0$ given by 
Proposition~\ref{GWP-KG} $(\ref{3.2.iii})$. Then for all 
$t \in [0,\tau]$
\begin{equation*} 
 \Phi^u(t+t_0)\big(\psi_0, \psi_1\big) = 
\Big(   \sum_{j=0}^\infty \widetilde{Z}_{j,t,t_0}^u(\psi_0,\psi_1 )   ,    \sum_{j=0}^\infty \partial_t  \widetilde{Z}_{j,t,t_0}^u(\psi_0,\psi_1 )     \Big).
\end{equation*}
\end{proposition}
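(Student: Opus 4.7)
The strategy is to exploit the telescoping structure built into the definition of $\widetilde{Z}_{j,t,t_0}^u$ and to identify the limit of the partial sums with the flow $\Phi^u$ via the quantitative estimate \eqref{picard}. No extra analysis beyond what has already been done in Proposition~\ref{GWP-KG} is needed.

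First I would record the telescoping identity. Since $\widetilde{Y}_{0,t,t_0}^u=0$ and $\widetilde{Z}_{j,t,t_0}^u = \widetilde{Y}_{k(j+1),t,t_0}^u - \widetilde{Y}_{kj,t,t_0}^u$, the partial sums collapse: for every $N\geq 1$,
\[
\sum_{j=0}^{N-1} \widetilde{Z}_{j,t,t_0}^u(\psi_0,\psi_1) = \widetilde{Y}_{kN,t,t_0}^u(\psi_0,\psi_1),
\]
and the same telescoping, valid termwise in $L^2$ because each $\partial_t \widetilde{Y}_{kj,\cdot,t_0}^u$ belongs to $L^\infty_T L^2$ by the Picard construction used for the norm $\|\cdot\|_T$, yields
\[
\sum_{j=0}^{N-1} \partial_t \widetilde{Z}_{j,t,t_0}^u(\psi_0,\psi_1) = \partial_t \widetilde{Y}_{kN,t,t_0}^u(\psi_0,\psi_1).
\]

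Then I would invoke Proposition~\ref{GWP-KG}$(\ref{3.2.iii})$, which on the interval $[0,\tau]$ gives
\[
\bigl\|\bigl(\psi(t+t_0) - \widetilde{Y}_{kN,t,t_0}^u,\; \partial_t\psi(t+t_0) - \partial_t\widetilde{Y}_{kN,t,t_0}^u\bigr)\bigr\|_{H^1(\mathcal{M})\times L^2(\mathcal{M})} \leq C\, c_0^N.
\]
Since $c_0\in(0,1)$, letting $N\to\infty$ shows that the partial sums converge in $H^1(\mathcal{M})$ and $L^2(\mathcal{M})$ respectively to the two components of $\Phi^u(t+t_0)(\psi_0,\psi_1) = (\psi,\partial_t\psi)(t+t_0)$, which is precisely the stated identity.

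No substantial obstacle remains: the entire analytic content (local existence, uniform control of the time of existence as a function of $\|\psi_0\|_{H^1}$, $\|\psi_1\|_{L^2}$, $L$ and $T$ only, and the geometric contraction of $F^k$) has already been absorbed into Proposition~\ref{GWP-KG}. The present proposition is simply the reformulation of that Picard convergence as a telescoping series on the common time interval $[0,\tau]$ where the contraction is effective; the only point to verify is that the bound \eqref{picard} controls the $H^1$-component and the $L^2$-component of the time derivative simultaneously, so that both series in the statement converge jointly.
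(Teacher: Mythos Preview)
Your argument is correct and is exactly the approach the paper takes: the paper's proof consists of the single sentence ``This result is a direct consequence of~\eqref{picard},'' and your proposal simply spells out that consequence via the telescoping identity $\sum_{j=0}^{N-1}\widetilde{Z}_{j,t,t_0}^u=\widetilde{Y}_{kN,t,t_0}^u$ together with the geometric bound from \eqref{picard}.
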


\begin{proof}
This result is a direct consequence of~\eqref{picard}.
\end{proof}

  \subsubsection{Proof of the compactness result}
  \label{SEC_compactness_argument_KG}
We now proceed to the end of the proof of Theorem~\ref{THM_BMS_KG}. For every 
$(\widetilde{\psi}_0,\widetilde{\psi}_1)$ in 
$H^1(\mathcal{M})\times L^2(\mathcal{M})$, we define 
the attainable set from $(\widetilde{\psi}_0,\widetilde{\psi}_1)$ in time less 
than~$T$ with controls whose $L^1$ norm is less than $L$:
\[
\mathcal{V}^{T,L}(\widetilde{\psi}_0,\widetilde{\psi}_1)= 
\big\{ \Phi^u(t) (\widetilde{\psi}_0,\widetilde{\psi}_1) \;| \; 
u \in L^1([0,T],\mathbf{R}), \|u\|_{L^1([0,T],\mathbf{R})} \leq L, 
0 \leq t \leq T \big\}.
\]

  \begin{proposition}\label{PRO_compctness_KG_local}
  For every $(\widetilde{\psi}_0,\widetilde{\psi}_1)$ in $H^1(\mathcal{M}) \times L^2(\mathcal{M})$, for every $L>0$, for every $T\leq \tau$ (defined in 
  Proposition~\ref{GWP-KG} $(\ref{3.2.iii})$), 
  $\mathcal{V}^{T,L}(\widetilde{\psi}_0,\widetilde{\psi}_1)$ is contained in a compact set  of $H^1(\mathcal{M}) \times L^2(\mathcal{M})$. 
  \end{proposition}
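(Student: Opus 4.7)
The plan is to mimic the abstract argument of Section~\ref{SEC_proof_BMS_Lipschitz}, replacing the Dyson expansion from Proposition~\ref{PRO_Dyson_expansion} by its wave-equation analogue, Proposition~\ref{PRO_Dyson_KG}. Fix $\varepsilon>0$ and aim to produce a finite $\varepsilon$-net of $\mathcal{V}^{T,L}(\widetilde\psi_0,\widetilde\psi_1)$ in $H^1(\mathcal{M})\times L^2(\mathcal{M})$. First, the quantitative Picard estimate~\eqref{picard} in Proposition~\ref{GWP-KG}~$(\ref{3.2.iii})$ furnishes an integer $p_\varepsilon$ such that, uniformly over $u\in L^1([0,T])$ with $\|u\|_{L^1}\leq L$ and $t\in[0,T]\subset[0,\tau]$,
\[
\big\|\Phi^u(t)(\widetilde\psi_0,\widetilde\psi_1)-\big(\widetilde{Y}_{kp_\varepsilon,t,0}^u,\partial_t\widetilde{Y}_{kp_\varepsilon,t,0}^u\big)\big\|_{H^1\times L^2}<\frac{\varepsilon}{2}.
\]
Therefore $\mathcal{V}^{T,L}(\widetilde\psi_0,\widetilde\psi_1)$ is contained in the $\varepsilon/2$-neighbourhood of
\[
\mathcal{W}_{p_\varepsilon}^{T,L}:=\big\{\big(\widetilde{Y}_{kp_\varepsilon,t,0}^u,\partial_t\widetilde{Y}_{kp_\varepsilon,t,0}^u\big):t\in[0,T],\,\|u\|_{L^1([0,T])}\leq L\big\},
\]
and it suffices to show that $\mathcal{W}_{p_\varepsilon}^{T,L}$ is relatively compact in $H^1\times L^2$, since it will then admit a finite $\varepsilon/2$-net, providing the desired $\varepsilon$-net for $\mathcal{V}^{T,L}$.

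Next, I would prove by induction on $p$ that each $\mathcal{W}_p^{T,L}$ is relatively compact in $H^1\times L^2$, in direct analogy with Proposition~\ref{PRO_Yj_compact}. The case $p=0$ is trivial. For the inductive step, one applies the Picard operator $F$ exactly $k$ times, so it is enough to check that a single application of $F$ preserves relative compactness. The key observation is that the two maps
\[
(s,\phi,\eta)\longmapsto S_1(t-s)\big(B(x)\eta\big)\quad\text{and}\quad(s,\phi,\eta)\longmapsto S_1(t-s)\phi^3,
\]
together with their analogues involving $\partial_t S_1=S_0$ for the velocity component, are continuous from $[0,T]\times H^1(\mathcal{M})\times L^2(\mathcal{M})$ into $H^1(\mathcal{M})$ (respectively $L^2(\mathcal{M})$): for the first this uses $B\in L^\infty$ together with the fact that $S_1$ gains one derivative; for the second this uses the Sobolev embedding $H^1(\mathcal{M})\hookrightarrow L^6(\mathcal{M})$ in dimension three, which makes $\phi\mapsto\phi^3$ continuous from $H^1$ into $L^2$. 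The images of $[0,T]\times\overline{\mathcal{W}_p^{T,L}}$ under these maps are therefore relatively compact, and the partition-of-unity argument of Proposition~\ref{PRO_Yj_compact} then applies essentially verbatim, yielding relative compactness of $\mathcal{W}_{p+1}^{T,L}$.

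Combining the two steps produces a finite $\varepsilon$-net of $\mathcal{V}^{T,L}(\widetilde\psi_0,\widetilde\psi_1)$ for every $\varepsilon>0$, hence its relative compactness. I expect the main technical point, rather than any single hard step, to be the bookkeeping needed when transplanting the partition-of-unity construction: unlike the abstract setting, the cubic term is \emph{not} linear in $u$, so the approximation of $\int_0^t S_1(t-s)(\widetilde{Y}_{kp,s,0}^u)^3\,\mathrm{d}s$ by finite sums produces coefficients in a compact subset of $[0,T]$ (with no factor $u(s)$), whereas the bilinear term retains the $u(s)$-weighted integral and yields coefficients in a compact subset of $[-L,L]$; both contributions must be covered simultaneously, and the regularity of the propagators $S_0,S_1$ together with the cubic Sobolev estimate must be invoked for both components of the pair $(\widetilde{Y},\partial_t\widetilde{Y})$. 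None of this presents a serious obstacle beyond careful organization.
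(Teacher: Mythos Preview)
Your proposal is correct and follows essentially the same route as the paper: the paper's proof simply invokes the abstract argument of Theorem~\ref{THM_BMS_abstract} (Dyson expansion truncation via Proposition~\ref{PRO_Dyson_KG}, then the partition-of-unity compactness step of Proposition~\ref{PRO_Yj_compact}), noting only that the relevant maps $(s,\phi)\mapsto S_1(T-s)B\phi$ and $(s,\psi)\mapsto S_1(T-s)\psi^3$ are continuous into $H^1(\mathcal{M})$. Your write-up is in fact more careful than the paper's one-line sketch, since you track both components $(\widetilde{Y},\partial_t\widetilde{Y})$ explicitly and flag the distinction between the $u$-weighted and unweighted integrals; the only cosmetic point is that your integrand maps still carry the free variable $t$, so to match the partition-of-unity argument literally you should either include $t$ in the compact domain or, as the paper does, replace $S_1(t-s)$ by $S_1(T-s')$ via the substitution $s'=T-t+s$.
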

\begin{proof}
The proof of Proposition~\ref{PRO_compctness_KG_local} proceeds in exactly the same way as the proof 
of Theorem~\ref{THM_BMS_abstract}, using the Dyson expansion 
(Proposition~\ref{PRO_Dyson_KG}) and the fact that the mappings 
\[
\begin{array}{llcl}
G_1:&[0,T]\times L^2(\mathcal{M}) &\longrightarrow & H^1(\mathcal{M})\\
&(s,\phi) & \longmapsto &  \dis  
\frac{\sin((T-s) \sqrt{-\Delta+m} )}{ \sqrt{-\Delta+m}}B \phi
\end{array}
\]
and 
\[
\begin{array}{llcl}
G_2:&[0,T]\times H^1(\mathcal{M}) &\longrightarrow & H^1(\mathcal{M})\\
&(s,\psi) & \longmapsto &  \dis  
\frac{\sin((T-s) \sqrt{-\Delta+m} )}{ \sqrt{-\Delta+m}}\psi^3
\end{array}
\]
are continuous.
\end{proof} 

\begin{proposition}\label{PRO_tau_uniforme}
For every $(\psi_0,\psi_1)$ in $H^1(\mathcal{M}) \times L^2(\mathcal{M})$, and for every $L,T>0$, there exists $\tau^\ast >0$ such that, for every 
$(\widetilde{\psi}_0, \widetilde{\psi}_1)$ in the topological closure of
$\mathcal{V}^{T,L}({\psi}_0,{\psi}_1)$, 
the time $\tau$ given in Proposition~\ref{GWP-KG} $(\ref{3.2.iii})$ 
satisfies $\tau > \tau^\ast$.   
\end{proposition}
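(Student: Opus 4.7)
The plan is to use the uniform boundedness from Proposition~\ref{GWP-KG}~$(ii)$ together with the continuity and strict positivity of the local time of existence $\tau$ as a function of the data norms.

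First, I would invoke Proposition~\ref{GWP-KG}~$(ii)$ applied to the fixed initial data $(\psi_0,\psi_1)$ to obtain a constant
\[
C_0 = C(\|\psi_0\|_{H^1},\|\psi_1\|_{L^2},L,T)
\]
such that every element $(\widetilde{\psi}_0,\widetilde{\psi}_1)=\Phi^u(t)(\psi_0,\psi_1)$ with $\|u\|_{L^1([0,T])}\leq L$ and $0\leq t\leq T$ satisfies $\|\widetilde{\psi}_0\|_{H^1}+\|\widetilde{\psi}_1\|_{L^2}\leq C_0$. In other words, $\mathcal{V}^{T,L}(\psi_0,\psi_1)$ is bounded in $H^1(\mathcal{M})\times L^2(\mathcal{M})$ by $C_0$.

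Next, since the norms $\|\cdot\|_{H^1}$ and $\|\cdot\|_{L^2}$ are continuous, the same bound persists for every point in the topological closure of $\mathcal{V}^{T,L}(\psi_0,\psi_1)$ in $H^1(\mathcal{M})\times L^2(\mathcal{M})$. So every $(\widetilde{\psi}_0,\widetilde{\psi}_1)$ in this closure satisfies $\|\widetilde{\psi}_0\|_{H^1}\leq C_0$ and $\|\widetilde{\psi}_1\|_{L^2}\leq C_0$.

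Now I would use the crucial property emphasised after Proposition~\ref{GWP-KG}: the time $\tau=\tau(\|\widetilde{\psi}_0\|_{H^1},\|\widetilde{\psi}_1\|_{L^2},L,T)$ depends on the data only through their norms, and is a continuous, strictly positive function of those scalar arguments. With $L$ and $T$ fixed, the restriction of $(a,b)\mapsto \tau(a,b,L,T)$ to the compact rectangle $[0,C_0]\times[0,C_0]$ attains a strictly positive minimum
\[
\tau^\ast := \min_{(a,b)\in[0,C_0]^2}\tau(a,b,L,T) > 0.
\]
For every $(\widetilde{\psi}_0,\widetilde{\psi}_1)$ in the closure of $\mathcal{V}^{T,L}(\psi_0,\psi_1)$, the pair $(\|\widetilde{\psi}_0\|_{H^1},\|\widetilde{\psi}_1\|_{L^2})$ lies in this rectangle, whence $\tau\geq \tau^\ast$, which is the desired conclusion.

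The only mild subtlety (and the step I would be most careful about) is verifying that $\tau$ is actually a continuous, strictly positive function of $(\|\widetilde{\psi}_0\|_{H^1},\|\widetilde{\psi}_1\|_{L^2},L,T)$ in the sense claimed in Proposition~\ref{GWP-KG}~$(\ref{3.2.iii})$; this is built into the fixed-point construction (the constraints $cT_1R^2\leq 1/4$ and the choice of $T_2$ defining $\tau$) and yields an explicit continuous formula, so no separate argument is needed beyond pointing back to that construction.
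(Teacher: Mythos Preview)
Your proposal is correct and follows essentially the same approach as the paper: the paper's proof simply observes that $\tau$ depends only on the norms of the data (not the data themselves) and that the energy bound~\eqref{boundEnergy} uniformly controls those norms over $\mathcal{V}^{T,L}(\psi_0,\psi_1)$; you have made explicit the passage to the closure and the extraction of a uniform positive lower bound via continuity on a compact box, which the paper leaves implicit. One trivial remark: since you define $\tau^\ast$ as the minimum you obtain $\tau\geq\tau^\ast$, so to match the strict inequality in the statement just replace $\tau^\ast$ by, say, $\tau^\ast/2$.
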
 
\begin{proof}
The time $\tau$ appearing in Proposition~\ref{GWP-KG} $(\ref{3.2.iii})$ is the 
time $\tau$ for which the Dyson expansion (Proposition~\ref{PRO_Dyson_KG}) is 
valid. As proved in Proposition~\ref{GWP-KG}, this time depends on the norm of $
\psi_0$ and~$\psi_1$ (not on $\psi_0$ and $\psi_1$ themselves). The conclusion follows 
from the energy bound~\eqref{boundEnergy}.  
\end{proof}

\begin{proposition}\label{PRO_compactness_KG_T}
For every $T,L>0$ and $(\psi_0,\psi_1)$ in 
$H^1(\mathcal{M}) \times L^2(\mathcal{M})$, the set 
$ \mathcal{V}^{T,L}({\psi}_0,{\psi}_1)$ is relatively compact  in $H^1(\mathcal{M}) \times L^2(\mathcal{M})$. 
\end{proposition}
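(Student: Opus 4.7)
The plan is to iterate the local compactness result (Proposition~\ref{PRO_compctness_KG_local}) finitely many times, using the uniform lower bound $\tau^\ast$ on the time of validity of the Dyson expansion supplied by Proposition~\ref{PRO_tau_uniforme}. First, I fix $L,T>0$ and $(\psi_0,\psi_1)\in H^1(\mathcal{M})\times L^2(\mathcal{M})$, and apply Proposition~\ref{PRO_tau_uniforme} to produce $\tau^\ast>0$ valid along the whole closure of $\mathcal{V}^{T,L}(\psi_0,\psi_1)$. Then I set $N=\lceil T/\tau^\ast\rceil$ and $t_k=\min(k\tau^\ast,T)$ for $k=0,\ldots,N$, so that $t_N=T$.

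Next, I would proceed by finite induction on $k$, the claim being that $K_k:=\overline{\mathcal{V}^{t_k,L}(\psi_0,\psi_1)}$ is compact in $H^1(\mathcal{M})\times L^2(\mathcal{M})$. The base case $k=0$ is trivial. For the inductive step, I invoke the concatenation (semi-group) property of mild solutions: any element of $\mathcal{V}^{t_{k+1},L}(\psi_0,\psi_1)$ either already lies in $\mathcal{V}^{t_k,L}(\psi_0,\psi_1)\subset K_k$, or can be written as $\Phi^{\tilde u}(s)(\phi_0,\phi_1)$ with $(\phi_0,\phi_1)\in K_k$, $s\in[0,\tau^\ast]$, and $\tilde u=u(\cdot+t_k)$ of $L^1$ norm at most $L$. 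Hence it suffices to establish a compact-initial-data upgrade of Proposition~\ref{PRO_compctness_KG_local}: for any compact $K\subset H^1(\mathcal{M})\times L^2(\mathcal{M})$, the set
\[
\bigcup_{(\phi_0,\phi_1)\in K}\mathcal{V}^{\tau^\ast,L}(\phi_0,\phi_1)
\]
is relatively compact.

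To prove this upgrade, I would rerun the inductive argument of Proposition~\ref{PRO_Yj_compact} and the tail argument of Theorem~\ref{THM_BMS_abstract}, carrying $(\phi_0,\phi_1)\in K$ as an additional parameter: the $j=0$ base case becomes the continuous image of the compact set $[0,\tau^\ast]\times K$ under $(t,\phi_0,\phi_1)\mapsto S_0(t)\phi_0+S_1(t)\phi_1$, which is compact; the inductive step on $j$ then goes through verbatim using the continuity of the operators $G_1,G_2$ appearing in Proposition~\ref{PRO_compctness_KG_local}; and the tails of the Dyson expansion from Proposition~\ref{PRO_Dyson_KG} are controlled uniformly in $(\phi_0,\phi_1)\in K$ thanks to the uniform lower bound on the time of validity. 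The principal (and essentially only) obstacle in this plan is precisely this uniformity in the initial data, which is exactly what Proposition~\ref{PRO_tau_uniforme} is tailored to provide: without a $\tau^\ast$ depending only on norms, one could not iterate the local step a predictable finite number of times to cover $[0,T]$. Applying the upgrade $N$ times then closes the induction and yields the proposition.
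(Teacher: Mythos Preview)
Your proposal is correct and follows the same overall time-slicing induction (with step $\tau^\ast$) as the paper, but the inductive step is carried out differently. The paper argues via sequential compactness: given a sequence $\Phi^{u_n}(t_n)(\psi_0,\psi_1)$ in $\mathcal{V}^{T,L}$ with $T\in(p\tau^\ast,(p+1)\tau^\ast]$, it uses the induction hypothesis to extract a subsequence along which the states at time $p\tau^\ast$ converge to some limit $A^\infty$, applies Proposition~\ref{PRO_compctness_KG_local} from that \emph{single} limit point to extract a further convergent subsequence, and closes by invoking continuity of the flow in the initial data. Your route---proving once and for all a compact-initial-data upgrade of the local result and then covering $\mathcal{V}^{t_{k+1},L}$ by $K_k\cup\bigcup_{(\phi_0,\phi_1)\in K_k}\mathcal{V}^{\tau^\ast,L}(\phi_0,\phi_1)$---is the total-boundedness counterpart: it is somewhat longer (you must rerun Proposition~\ref{PRO_Yj_compact} and the tail argument with the extra parameter $(\phi_0,\phi_1)\in K$), but it makes the required uniformity in the initial data fully explicit rather than hiding it inside the phrase ``by continuity of $\Phi^u(t)(\cdot,\cdot)$''. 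Both approaches rest on exactly the same substantive input, namely that the constants $k$, $c_0$, $C$, and $\tau$ in Proposition~\ref{GWP-KG}~$(\ref{3.2.iii})$ depend only on the norms of the data, which is precisely what Proposition~\ref{PRO_tau_uniforme} packages.
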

\begin{proof} 
In the following, for every real function $u:\mathbf{R}\to \mathbf{R}$ and every 
interval $I=[a,b]$ of $\mathbf{R}$, we define the function $R_I u$ by 
$R_I u (x)=u(a+x)$ for $x$ in $[0,b-a]$ and $R_I u (x)=0$ else.

Let $\tau^\ast$ be as defined in Proposition~\ref{PRO_tau_uniforme}. We proceed by induction on $p$ in $\mathbf{N}$ to prove 
Proposition~\ref{PRO_compactness_KG_T} for $T\leq p\tau^\ast$.

For $p=1$, this is just Proposition~\ref{PRO_compctness_KG_local}.

Assume the result holds for  $p\geq 1$.    
Let $T$ be in $(p\tau^\ast, (p+1) \tau^\ast]$ and 
$(A_n)_{n\in \mathbf{N}}= \big(\Phi^{u_n}(t_n)(\psi_0,\psi_1)\big)_{n\in 
\mathbf{N}}$ be a sequence in $ \mathcal{V}^{T,L}({\psi}_0,{\psi}_1)$. We aim 
to find a convergent subsequence of $(A_n)_{n\in \mathbf{N}}$, which will prove 
the relative compactness of $ \mathcal{V}^{T,L}({\psi}_0,{\psi}_1)$. 

By the induction hypothesis, the set
$\mathcal{V}^{p\tau^\ast,L}({\psi}_0,{\psi}_1)$ is relatively compact, hence up 
to extraction of a subsequence, one may assume that the sequence $\big(\Phi^{u_n}(p\tau^\ast)(\psi_0,
\psi_1)\big)_{n\in \mathbf{N}}$ converges to some limit $A^\infty_{p\tau^\ast}$. By 
Proposition~\ref{PRO_tau_uniforme}, $\tau(A^\infty_{p\tau^\ast},L)>\tau^\ast$. Hence, by 
Proposition~\ref{PRO_compctness_KG_local}, the set $ \mathcal{V}^{\tau^\ast,L}
(A^\infty_{p\tau^\ast})$ is relatively compact and, up to extraction of a subsequence, one may 
assume that the sequence $\big(\Phi^{R_{[p\tau^\ast, t_n]}u_n}(t_n-\tau^\ast)
(A^\infty_{\tau^\ast})\big)_{n\in \mathbf{N}}$ converges to some limit 
$A^\infty_{T_\infty}$. 
By continuity of $\Phi^u(t)(\cdot, \cdot)$, 
the sequence $\big(\Phi^{u_n}(t_n)(\psi_0,\psi_1)\big)_{n\in 
\mathbf{N}}$ also converges to $A^\infty_{T_\infty}$, and that 
concludes the proof of Proposition~\ref{PRO_compactness_KG_T}.
\end{proof}

\begin{proof}[Proof of Theorem~\ref{THM_BMS_KG}:]  
It remains to prove the last statement of Theorem~\ref{THM_BMS_KG}. This follows from 
Proposition~\ref{PRO_compactness_KG_T} by noticing that
$\displaystyle{ \bigcup_{t \in \R} \bigcup_{u \in L^1} 
\big\{\Phi^u(t) (\psi_0,\psi_1)\big\} \subset \bigcup_{\ell \in \mathbf{N}} 
\bigcup_{n \in \mathbf{N}}  \mathcal{V}^{n,\ell}({\psi}_0,{\psi}_1)}$.
\end{proof}


 \appendix
 
\section{Sobolev spaces} \label{Appendix}

The aim of this Appendix is to recall the classical  Sobolev embedding theorem, which is instrumental in the proof of Proposition~\ref{GWP-KG}.  For more details, the reader may refer to the classical reference \cite[Theorem 5.4, statements (3) and (4)]{Adams}.
\subsection{Definition}

Let $\mathcal{M}$ be an open subset of $\mathbf{R}^n$ or 
a compact Riemannian  manifold of dimension~$n$. 
For every $k$ in $\mathbf{N}$ and every $p$ in $[1,+\infty]$, the  Sobolev 
space $W^{k,p}(\mathcal{M})$ is defined as the set of functions from $\mathcal{M}$ to 
$\mathbf{R}$ whose partial derivatives up to order $k$ belongs to
 $L^p(\mathcal{M})$, that is:
\[
W^{k,p}(\mathcal{M})=\big\{\psi \in L^p(\mathcal{M})\; |\; D^\alpha \psi \in L^p(\mathcal{M}), \quad \forall \, |\alpha|\leq k\big\}.
\]
When endowed with the norm $\|\psi \|_{W^{k,p}(\mathcal{M})}=
\sum_{|\alpha|\leq p}\|D^\alpha 
\psi \|_{L^p}$, $W^{k,p}(\mathcal{M})$ turns into a Banach space.

In the case where $p=2$, $W^{k,2}(\mathcal{M})$ turns into a Hilbert space and is 
usually denoted by $H^2(\mathcal{M})$.

\subsection{Sobolev embedding theorem}\label{SEC_Sobolev_Embedding}

For every integers $k,\ell$ and every real numbers $p,q$ such that $k>\ell$, 
 $(k-\ell)p<n$, and $1\leq p < q \leq np/(n-(k-\ell)p) \leq +\infty $, 
\[ W^{k,p}(\mathcal{M}) \subset W^{\ell,q}(\mathcal{M}) \]
and the embedding is continuous. In particular, 
there exists $C_{Sob}(p,q,k,n)>0$ such that 
\[ \|\psi\|_{W^{\ell,q}(\mathcal{M})} \leq C_{Sob}(p,q,k,n) 
\|\psi\|_{W^{k,p}(\mathcal{M})}.\]

In particular, if $k=1$ and $\ell=0$, one gets
\begin{proposition}[Sobolev embedding]\label{PRO_Sobolev_embedding}
If $1/p^\ast = 1/p-1/n$ then $W^{1,p}(\mathcal{M}) \subset L^{p^\ast} (\mathcal{M})$.
\end{proposition}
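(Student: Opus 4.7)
The plan is to reduce to the Euclidean case and then establish the key Gagliardo--Nirenberg--Sobolev inequality on $\mathbf{R}^n$. First I would handle the case $\mathcal{M} = \mathbf{R}^n$: for $\psi \in C_c^\infty(\mathbf{R}^n)$ I would prove the borderline inequality
\[ \|\psi\|_{L^{n/(n-1)}(\mathbf{R}^n)} \leq C \|\nabla \psi\|_{L^1(\mathbf{R}^n)}, \]
and then obtain the general case by applying this to a suitable power of $|\psi|$ and using H\"older's inequality, finally passing to $W^{1,p}$ by density of $C_c^\infty$.

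For the borderline inequality, I would use the pointwise bound $|\psi(x)| \leq \int_{-\infty}^{+\infty} |\partial_i \psi(\ldots, t, \ldots)|\, dt$ valid for each coordinate direction $i$, take the geometric mean of these $n$ estimates (raising to the power $1/(n-1)$), and then integrate $n$ times applying the Loomis--Whitney inequality (iterated H\"older) at each step. This yields
\[ \int |\psi|^{n/(n-1)} \leq \prod_{i=1}^n \Bigl(\int |\partial_i \psi|\Bigr)^{1/(n-1)}, \]
from which the arithmetic--geometric mean inequality delivers the claim. To extend to $1<p<n$, I would apply the borderline inequality to $|\psi|^\gamma$ with $\gamma = p(n-1)/(n-p)$; H\"older gives $\|\nabla |\psi|^\gamma\|_{L^1} \leq \gamma \| |\psi|^{\gamma-1}\|_{L^{p'}} \|\nabla \psi\|_{L^p}$, and the exponents are designed so that $\gamma \cdot n/(n-1) = p^\ast$ and $(\gamma-1) p' = p^\ast$, giving exactly $\|\psi\|_{L^{p^\ast}} \leq C \|\nabla \psi\|_{L^p}$.

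For the manifold case, I would localize via a finite atlas $\{(U_i,\phi_i)\}$ (finite by compactness of $\mathcal{M}$) together with a subordinate smooth partition of unity $\{\chi_i\}$, write $\psi = \sum_i \chi_i \psi$, pull back each piece $(\chi_i \psi) \circ \phi_i^{-1}$ to a compactly supported function on a ball in $\mathbf{R}^n$, extend by zero, and apply the Euclidean inequality. Smoothness of the charts ensures that the Riemannian volume form and the Euclidean Lebesgue measure, as well as the intrinsic gradient and the coordinate partial derivatives, are comparable up to uniform constants, so summing the chart-wise estimates finishes the argument.

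The main obstacle is essentially bookkeeping: obtaining uniform constants in the partition-of-unity argument on a general open subset of $\mathbf{R}^n$ requires a hypothesis on $\partial \mathcal{M}$ (a Lipschitz boundary, say) so that a bounded Sobolev extension operator exists. For the two settings in which this paper actually uses the embedding ($\mathcal{M}$ a compact manifold without boundary, or $\mathcal{M} = \mathbf{R}^3$), no such difficulty arises, and the argument reduces almost entirely to the Gagliardo--Nirenberg--Sobolev inequality on $\mathbf{R}^n$ itself.
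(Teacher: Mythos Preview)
Your proposal is a correct and standard self-contained proof of the Sobolev embedding via the Gagliardo--Nirenberg--Sobolev inequality, but it differs from the paper in that the paper does not actually prove this proposition at all: the appendix simply recalls the general embedding $W^{k,p}(\mathcal{M}) \subset W^{\ell,q}(\mathcal{M})$ with a reference to \cite[Theorem 5.4]{Adams}, and then records Proposition~\ref{PRO_Sobolev_embedding} as the special case $k=1$, $\ell=0$. So where you supply the full Loomis--Whitney argument, bootstrap to general $p$, and localize via a partition of unity, the paper treats the result as classical and defers entirely to the literature. Your approach buys self-containment and makes the constants explicit; the paper's approach is appropriate given that the embedding is only used as a tool in the proof of Proposition~\ref{GWP-KG} and is not itself an object of study.
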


\section*{Acknowledgments}

T. Chambrion is supported by the grant ``QUACO'' ANR-17-CE40-0007-01.
 L. Thomann is supported by the grants ``BEKAM''  ANR-15-CE40-0001 
 and ``ISDEEC'' ANR-16-CE40-0013.   
   
\bibliographystyle{alpha}

\Addresses

\end{document}